\definecolor{halfgray}{gray}{0.55} 
\definecolor{webgreen}{rgb}{0,0.5,0}
\definecolor{webbrown}{rgb}{.6,0,0} \hypersetup{%
\newtheorem{definition}{Definition}
\newtheorem{theorem}{Theorem}
\newtheorem{proposition}{Proposition}
\newtheorem{corollary}{Corollary}
\newtheorem{lemma}{Lemma}
\newtheorem{remark}{Remark}
\renewcommand{\epsilon}{\varepsilon}
\renewcommand{\phi}{\varphi}
\DeclareMathOperator{\Ker}{Ker}
\def\Z{\mathbb{Z}}
\def\R{\mathbb{R}}
\def\cA{\EuScript{A}}
\def\Id{\text{\rm Id}}
\begin{document}

\title[Hyers-Ulam stability for hyperbolic random dynamics]{Hyers-Ulam stability for hyperbolic random dynamics}

\begin{abstract}
We prove that small nonlinear perturbations of random linear dynamics admitting a tempered exponential dichotomy have a random version of the shadowing property. As a consequence, if the exponential dichotomy is uniform, we get that the random linear dynamics is Hyers-Ulam stable. Moreover, we apply our results to study the conservation of Lyapunov exponents of the random linear dynamics subjected to nonlinear perturbations.
\end{abstract}

\author{Lucas Backes}
\address{Departamento de Matem\'{a}tica, Universidade Federal do Rio Grande do Sul, Av. Bento Gonc¸alves 9500, CEP 91509-900, Porto Alegre, RS, Brazil}
\email{lucas.backes@ufrgs.br}

\author{Davor Dragi\v cevi\'c}
\address{Department of Mathematics, University of Rijeka, Croatia}
\email{ddragicevic@math.uniri.hr}

\keywords{ Hyers-Ulam stability, hyperbolicity, random dynamical systems}
\subjclass[2010]{Primary: 37C50, 34D09; Secondary: 34D10.}
\maketitle

\maketitle

\section{Introduction}

The foundations of the theory of chaotic dynamical systems dates back to the work of Poincar\'e \cite{Poi90} and is now a well developed area of research. An important feature of chaotic dynamical systems, already observed by Poincar\'e, is the sensitivity to initial conditions: any small change to the initial condition may led to a large discrepancy in the output. This fact makes somehow complicated or even impossible the task of predicting the real trajectory of the system based on approximations. On the other hand, many chaotic systems, like uniformly hyperbolic dynamical systems \cite{An70, Bow75}, exhibit an important property stating that, even though a small error in the initial condition may led eventually to a large effect, there exists a true orbit with a slightly different initial condition that stays near the approximate trajectory. This property is known as the \emph{shadowing property} and its study was initiated in the seminal papers of Anosov~\cite{An70} and Bowen~\cite{Bow75}. Their original approach which was  based on the invariant manifold theory was later greatly simplified by Mayer and Sell~\cite{MS87} as well as Palmer~\cite{Pal88},
who presented purely analytic proofs of the shadowing property in the context of uniformly hyperbolic dynamics. We refer to the books~\cite{Pal00, Pil99} for more details and further references on the shadowing theory. 

More recently, numeruous authors have begun to study a similar problem in the context of differential or difference equations. More precisely, they are concerned  with formulating sufficient conditions under which one is able  to find an exact solution of a differential or difference equation in  a vicinity of an approximate solution. If the equation
possesses this property, we say that it exhibits \emph{Hyers-Ulam stability}. This terminology is used due  to the fact that Ulam~\cite{Ulam}   proposed a similar type of problem for functional equations,  whose partial solution was provided by Hyers~\cite{Hyers}.

As already mentioned, in the recent years many results dealing with Hyers-Ulam stability (discussing both its presence and lack of it) of differential and difference equations have been obtained. We in particular refer to the works of Brzd\c{e}k, Popa and Xu~\cite{BPX1, BPX2, BPX3, BPX4},  Jung~\cite{Jung},  Popa~\cite{Popa, Popa2}, Popa and Ra\c{s}a~\cite{PopaRasa,PopaRasa2}, 
Wang, Fe\v{c}kan and Tian~\cite{WFT}, Wang, Fe\v{c}kan and Zhou~\cite{WFZ}, 
Xu~\cite{X} 
as well as Xu,  Brzd\c{e}k and Zhang~\cite{XBZ}.

The relationship between hyperbolicity and Hyers-Ulam stability has been systematically studied in a series of papers by C. Bu\c{s}e and collaborators~\cite{BBT2,BRST,BLR}. Let us briefly describe the main results from~\cite{BBT2}. Assume that $A$ is a complex matrix of order $m$ and consider the associated dynamics
\begin{equation}\label{nad}
x_{n+1}=Ax_n, \quad n\ge 0.
\end{equation}
Then, it was established in~\cite{BBT2} that the following statements are equivalent:
\begin{itemize}
\item $A$ is hyperbolic, i.e. the spectrum of $A$ doesn't intersect the unit circle;
\item there exists $L>0$ such that for any $\epsilon >0$ and any sequence $(y_n)_{n\ge 0}\subset \mathbb C^m$ such that 
\[
\sup_{n\ge 0}\lVert y_{n+1}-Ay_n\rVert \le \epsilon, 
\]
there exists a  solution $(x_n)_{n\ge 0}\subset \mathbb C^m$ of~\eqref{nad} such that
\[
\sup_{n\ge 0}\lVert x_n-y_n\rVert \le L\epsilon.
\]
In other words, each approximate solution of~\eqref{nad} is close to an exact solution. 
\end{itemize}
We note that similar result concerned with a difference equation
\begin{equation}\label{nad2}
x_{n+1}=A_nx_n, \quad n\ge 0,
\end{equation}
where $(A_n)_{n\ge 0}$ is a periodic sequence of complex matrices were obtained in~\cite{BRST}. 

A recent advancement in this line of the research  was made in~\cite{BDp} (with the arguments based on~\cite{BD19}, which in turn are inspired by versions of shadowing lemma for nonautonomous dynamics~\cite{CLP89}). More precisely, we study~\eqref{nad2} in the infinite-dimensional case and without an assumption on the periodicity of the sequence $(A_n)_{n\ge 0}$. We have proved that if the sequence
$(A_n)_{n\ge 0}$ admits an exponential dichotomy then~\eqref{nad2} exhibits Hyers-Ulam stability (showing that also that the converse holds  in the finite-dimensional case). In fact, we show that the same conclusion holds true if we slightly  perturb linear dynamics~\eqref{nad2}, i.e. if we consider nonlinear dynamics
\[
x_{n+1}=A_nx_n+f_n(x_n), \quad n\ge 0,
\]
where $(f_n)_{n\ge 0}$ is a sequence of suitable Lipschitz (nonlinear) maps on $X$.

The objective of this paper is to establish results similar to those in~\cite{BDp} in the context of random dynamical systems. The theory of random dynamical systems is by now very well developed and its relevance in studying various real-life phenomena (such as for example those modelled by stochastic differential equations) is widely recognized. We refer to~\cite{A} for  a detailed exposition of this theory. 

As usual in random dynamics, we start with a base space which is a probability space $(\Omega, \mathcal F, \mathbb P)$ together with an $\mathbb P$-preserving  invertible  transformation $\sigma \colon \Omega \to \Omega$. Furthermore, we  have a family of linear operators $(A(\omega))_{\omega \in \Omega}$ acting on some Banach space $X$ and a family $(f_\omega)_{\omega \in \Omega}$ of (nonlinear) maps on $X$. For
  $\omega \in \Omega$, 
we consider the nonlinear dynamics
\begin{equation}\label{83:6}
x_{n+1}=A(\sigma^n (\omega))x_n+f_{\sigma^n (\omega)}(x_n), \quad n\in \Z.
\end{equation}
We show that if the cocycle generated by the linear part of~\eqref{83:6} admits a so-called tempered exponential dichotomy and under suitable assumptions for nonlinear maps $f_\omega$, in a vicinity of each suitable approximate solution of~\eqref{83:6} we can find an exact solution. In the particular case, when the linear part of~\eqref{83:6} admits a uniform exponential dichotomy, our results imply that~\eqref{83:6} is Hyers-Ulam stable. 

In contrast to~\cite{BDp}, besides considering a random dynamics (and not nonautonomous dynamics given  by a sequence of maps), we also:
\begin{itemize}
\item deal with the situation when the linear part of~\eqref{83:6} admits a tempered-exponential dichotomy, i.e.  it is nonuniformly hyperbolic. The concept of nonuniform hyperbolicity originated in the landmark works of  Oseledets~\cite{Osel} and particularly 
 Pesin~\cite{P1} and proved to be a  nontrivial and far-reaching extension of the classical theory of uniformly hyperbolic dynamical systems
initiated by Smale~\cite{Smale}.  For extension of this theory to the case of infinite-dimensional dynamics we refer to the works of Ruelle~\cite{Ruelle}, Ma\~{n}\'{e}~\cite{Mane}, Thieullen~\cite{T}, Lian and Lu~\cite{LL}, Zhou, Lu and Zhang~\cite{ZLZ} and additional references therein.
\item consider a broader class of approximate solutions of~\eqref{83:6} than those in~\cite{BDp}. More precisely, we now deal with situations when the error in the one step iteration of the dynamics is no longer uniform over time. 
\end{itemize}
We stress that those novelties require us to substantially modify our previous arguments from~\cite{BDp}.

\section{Preliminaries}
Let $X$ be an arbitrary Banach space and let $B(X)$ denote the space of all bounded operators on $X$. Furthermore, consider a probability space $(\Omega, \mathcal F, \mathbb P)$ together with a $\mathbb P$-preserving  invertible  transformation $\sigma \colon \Omega \to \Omega$. We will assume that $\mathbb P$ is ergodic. 

Let $A\colon \Omega \to B(X)$ be a strongly measurable map, i.e. $\omega \to A(\omega)x$ is a measurable map for each $x\in X$. We consider the associated cocycle $\cA \colon \Omega \times \mathbb N_0 \to B(X)$ defined by
\[
\cA(\omega, n)=\begin{cases}
\Id & \text{if $n=0$;}\\
A(\sigma^{n-1}(\omega)) \cdots A(\sigma (\omega))A(\omega)  & \text{if $n\in \mathbb N$.}
\end{cases}
\]
Observe that
\[
\cA(\omega, n+m)=\cA(\sigma^m (\omega), n) \cA(\omega, m), \quad \text{for $\omega \in \Omega$ and $m, n\in \mathbb N_0$.}
\]

We recall some notions of central importance to our results. 
\begin{definition}
 A measurable map $K\colon \Omega \to (0, \infty)$ is said to be a \emph{tempered random variable} if
\[
\lim_{n\to \pm \infty} \frac 1 n \log K(\sigma^n(\omega))=0, \quad \text{for $\mathbb P$-a.e. $\omega \in \Omega$.}
\]
\end{definition}
\begin{definition}\label{xxv}
We say that $\cA$ admits a \emph{tempered exponential dichotomy} if there exist $\lambda >0$, a tempered random variable $K\colon \Omega \to (0, \infty)$, a $\sigma$-invariant set $\Omega' \subset \Omega$ with $\mathbb P(\Omega')=1$ and a strongly measurable map $\Pi \colon \Omega \to B(X)$ such that for $\omega \in \Omega'$:
\begin{enumerate}
\item $\Pi(\omega)$ is a projection on $X$;
\item \begin{equation}\label{proj}\Pi(\sigma^n (\omega))\cA(\omega, n)=\cA(\omega, n)\Pi(\omega) \quad \text{for $n\in \mathbb N$;}
\end{equation}
\item for $n\in \mathbb N$,
\[
\cA(\omega, n)\rvert_{\Ker \Pi(\omega)} \colon \Ker \Pi(\omega) \to \Ker \Pi(\sigma^n (\omega))
\]
is invertible;
\item 
\begin{equation}\label{td1}
\lVert \cA(\omega, n)\Pi(\omega)\rVert \le K(\omega)e^{-\lambda n}, \quad n\ge 0
\end{equation}
and
\begin{equation}\label{td2}
\lVert \cA(\omega, -n)(\Id-\Pi(\omega))\rVert \le K(\omega)e^{-\lambda n}, \quad n\ge 0,
\end{equation}
where 
\[
\cA(\omega, -n):=\bigg{(}\cA(\sigma^{-n}(\omega), n)\rvert_{\Ker \Pi (\sigma^{-n} (\omega))} \bigg{)}^{-1}.
\]
\end{enumerate}
\end{definition}

\begin{remark}
It was proved in~\cite{BD19-2} that if $\cA$ satisfies the assumptions of the version of the  Multiplicative ergodic theorem established in~\cite{GTQ} and has all nonzero Lyapunov exponents that then it admits a tempered exponential dichotomy. Hence, the notion of a tempered exponential dichotomy is ubiquitous from the ergodic theory point of view. 

\end{remark}

From now on, we assume that $\cA$ is a cocycle that admits a tempered exponential dichotomy. Let $f_\omega\colon X \to X$, $\omega \in \Omega$, be a family of (nonlinear) maps. 
 For $\omega \in \Omega$, we consider the associated nonlinear and nonautonomous  dynamics~\eqref{83:6}. 
Observe that~\eqref{83:6} can be written as
\begin{equation}\label{nde}
x_{n+1}=F_{\sigma^n (\omega)}(x_n)\quad n\in \Z,
\end{equation}
where
\[
F_\omega:=A(\omega)+f_\omega. 
\]

We now introduce a family of adapted norms. For $\omega \in \Omega'$ and $x\in X$, let
\[
\lVert x\rVert_{\omega}:=\sup_{n\ge 0}(\lVert \cA(\omega, n)\Pi(\omega)x\rVert e^{\lambda n})+\sup_{n\ge 0}(\lVert \cA(\omega, -n)(\Id-\Pi(\omega))x\rVert e^{\lambda n}).
\]
Note that it follows from~\eqref{td1} and~\eqref{td2}  that
\begin{equation}\label{ln}
\lVert x\rVert \le \lVert x\rVert_\omega \le 2K(\omega)\lVert x\rVert, \quad \text{for $\omega \in \Omega'$ and $x\in X$.}
\end{equation}

We need the following classical lemma whose proof we include for the sake of completness. 
\begin{lemma}
We have that for each $\omega \in \Omega'$, $n\ge 0$ and $x\in X$,
\begin{equation}\label{ln1}
\lVert \cA(\omega, n)\Pi(\omega)x\rVert_{\sigma^n (\omega)}\le e^{-\lambda n}\lVert x\rVert_\omega
\end{equation}
and
\begin{equation}\label{ln2}
\lVert \cA(\omega, -n)(\Id-\Pi(\omega))x\rVert_{\sigma^{-n} (\omega)}\le e^{-\lambda n}\lVert x\rVert_\omega.
\end{equation}
\end{lemma}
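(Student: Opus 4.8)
The plan is to prove each inequality by directly unwinding the definition of the adapted norm and exploiting the cocycle identity together with the invariance \eqref{proj} of the projection. For \eqref{ln1}, set $y:=\cA(\omega,n)\Pi(\omega)x$. Using \eqref{proj} and $\Pi(\omega)^2=\Pi(\omega)$ one gets $\Pi(\sigma^n(\omega))y=\cA(\omega,n)\Pi(\omega)^2x=y$, so $y\in \Ima \Pi(\sigma^n(\omega))$ and the second supremum in $\lVert y\rVert_{\sigma^n(\omega)}$ vanishes. Hence, by the cocycle property $\cA(\sigma^n(\omega),m)\cA(\omega,n)=\cA(\omega,n+m)$,
\[
\lVert y\rVert_{\sigma^n(\omega)}=\sup_{m\ge 0}\bigl(\lVert \cA(\sigma^n(\omega),m)\Pi(\sigma^n(\omega))y\rVert e^{\lambda m}\bigr)=\sup_{m\ge 0}\bigl(\lVert \cA(\omega,n+m)\Pi(\omega)x\rVert e^{\lambda m}\bigr).
\]
Re-indexing by $k=n+m$ (so $k$ ranges over integers $\ge n$) turns the right-hand side into $e^{-\lambda n}\sup_{k\ge n}\bigl(\lVert \cA(\omega,k)\Pi(\omega)x\rVert e^{\lambda k}\bigr)$, which is at most $e^{-\lambda n}\sup_{k\ge 0}\bigl(\lVert \cA(\omega,k)\Pi(\omega)x\rVert e^{\lambda k}\bigr)\le e^{-\lambda n}\lVert x\rVert_\omega$, proving \eqref{ln1}.

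For \eqref{ln2}, set $z:=\cA(\omega,-n)(\Id-\Pi(\omega))x$. Since $(\Id-\Pi(\omega))x\in \Ker\Pi(\omega)$ and $\cA(\omega,-n)$ is, by definition, the inverse of $\cA(\sigma^{-n}(\omega),n)\rvert_{\Ker\Pi(\sigma^{-n}(\omega))}\colon \Ker\Pi(\sigma^{-n}(\omega))\to\Ker\Pi(\omega)$, we get $z\in \Ker\Pi(\sigma^{-n}(\omega))$, so now the first supremum in $\lVert z\rVert_{\sigma^{-n}(\omega)}$ vanishes. The one technical point is the negative-time cocycle identity $\cA(\sigma^{-n}(\omega),-m)\,\cA(\omega,-n)=\cA(\omega,-(n+m))$; to see it, apply \eqref{proj} with base point $\sigma^{-(n+m)}(\omega)$ to check that $\cA(\sigma^{-(n+m)}(\omega),m)$ sends $\Ker\Pi(\sigma^{-(n+m)}(\omega))$ into $\Ker\Pi(\sigma^{-n}(\omega))$, write $\cA(\sigma^{-(n+m)}(\omega),n+m)=\cA(\sigma^{-n}(\omega),n)\,\cA(\sigma^{-(n+m)}(\omega),m)$ as a composition of invertible maps between the appropriate kernel fibers, and invert. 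With this identity,
\[
\lVert z\rVert_{\sigma^{-n}(\omega)}=\sup_{m\ge 0}\bigl(\lVert \cA(\sigma^{-n}(\omega),-m)z\rVert e^{\lambda m}\bigr)=\sup_{m\ge 0}\bigl(\lVert \cA(\omega,-(n+m))(\Id-\Pi(\omega))x\rVert e^{\lambda m}\bigr),
\]
and the same re-indexing $k=n+m$ together with monotonicity of the supremum gives $\lVert z\rVert_{\sigma^{-n}(\omega)}\le e^{-\lambda n}\sup_{k\ge 0}\bigl(\lVert \cA(\omega,-k)(\Id-\Pi(\omega))x\rVert e^{\lambda k}\bigr)\le e^{-\lambda n}\lVert x\rVert_\omega$, which is \eqref{ln2}.

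The only part requiring genuine care is the bookkeeping for negative times: one must keep track of which fiber $\Ker\Pi(\sigma^{j}(\omega))$ each inverse map acts on and verify that the cocycle relation survives inversion. Everything else is a mechanical unwinding of the definition of $\lVert\cdot\rVert_\omega$ and a shift of the index of the supremum, so I would keep the write-up correspondingly brief.
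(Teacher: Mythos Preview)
Your proof is correct and follows essentially the same route as the paper: unwind the definition of $\lVert\cdot\rVert_{\sigma^{\pm n}(\omega)}$, use \eqref{proj} and the cocycle identity to rewrite the surviving supremum in terms of $\cA(\omega,\cdot)$, then shift the index to extract $e^{-\lambda n}$. You are simply more explicit than the paper about why the other supremum vanishes and about the negative-time cocycle identity, which is fine.
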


\begin{proof}
We have that
\[
\begin{split}
\lVert \cA(\omega, n)\Pi(\omega)x\rVert_{\sigma^n (\omega)} &=\sup_{m\ge 0}(\lVert \cA(\sigma^{n}(\omega), m)\Pi(\sigma^n (\omega))\cA(\omega, n)\Pi(\omega)x\rVert e^{\lambda m}) \\
&=\sup_{m\ge 0} (\lVert \cA(\omega, n+m) \Pi(\omega)x\rVert e^{\lambda m}) \\
&=e^{-\lambda n}\sup_{m\ge 0} (\lVert \cA(\omega, n+m) \Pi(\omega)x\rVert e^{\lambda (m+n)}) \\
&\le e^{-\lambda n} \lVert x\rVert_\omega,
\end{split}
\]
and hence~\eqref{ln1} holds. Similarly, we have 
\[
\begin{split}
&\lVert \cA(\omega, -n)(\Id-\Pi(\omega))x\rVert_{\sigma^{-n} (\omega)} \\
&=\sup_{m\ge 0} (\lVert \cA(\sigma^{-n} (\omega), -m)(\Id-\Pi(\sigma^{-n}(\omega)))\cA(\omega, -n)(\Id-\Pi(\omega))x\rVert e^{\lambda m})\\
&=\sup_{m\ge 0}(\lVert \cA(\omega, -m-n)(\Id-\Pi(\omega))x\rVert e^{\lambda m})\\
&=e^{-\lambda n}\sup_{m\ge 0}(\lVert \cA(\omega, -m-n)(\Id-\Pi(\omega))x\rVert e^{\lambda (m+n)})\\
&\le e^{-\lambda n}\lVert x\rVert_\omega,
\end{split}
\]
and consequently~\eqref{ln2} also holds. 
\end{proof}
Before we state our first result, we will introduce  additional terminology.
\begin{definition}
For $r>0$ and a two-sided sequence of positive real numbers $\delta \colon \Z \to (0, \infty)$, we say that $\delta$ is \emph{$r$-admissible} if:
\[
\sup_{n\in \Z}\max \bigg{\{}\frac{\delta (n+1)}{\delta (n)}, \frac{\delta (n)}{\delta (n+1)} \bigg{\}}\le r.
\]
\end{definition}

\section{Main results}
The following is our first result. 
\begin{theorem}\label{t1}
Assume that $\cA$ admits a tempered exponential dichotomy and let $\Omega'\subset \Omega$, $\lambda >0$ and $K\colon \Omega \to (0, \infty)$ be as in the Definition~\ref{xxv}.  Furthermore, suppose that $\epsilon>0$, $c\ge 0$ are such that $\epsilon \le \lambda$ and that
\begin{equation}\label{contraction}
2ce^{\lambda-\epsilon} \frac{1+e^{-\epsilon}}{1-e^{-\epsilon}}<1. 
\end{equation}
Finally, we assume that
\begin{equation}\label{non}
\lVert f_\omega(x)-f_\omega(y)\rVert \le \frac{c}{K(\sigma (\omega))}\lVert x-y\rVert, \quad \text{for $\omega \in \Omega'$ and $x, y\in X$.}
\end{equation}
Then, there exists $L=L(\lambda, \epsilon, c)>0$ such that for every  $e^{\lambda-\epsilon}$-admissible sequence 
$\delta \colon \Z \to (0, \infty)$,  
 every  $\omega \in \Omega'$ and an arbitrary  sequence  $(y_n)_{n\in \Z}\subset X$ satisfying
\begin{equation}\label{pseudo}
\lVert y_n-F_{\sigma^{n-1}(\omega)}(y_{n-1})\rVert \le \frac{\delta (n)}{2K(\sigma^n (\omega))} \quad \text{for $n\in \Z$,}
\end{equation}
 there is a solution $(x_n)_{n\in \Z}$ of~\eqref{nde} with the property that
\begin{equation}\label{shad}
\lVert x_n-y_n\rVert \le L\delta (n), \quad \text{for each $n\in \Z$.}
\end{equation}
\end{theorem}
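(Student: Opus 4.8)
The plan is to recast the shadowing problem as a fixed-point problem in a suitable Banach space, using the adapted norms to absorb the nonuniformity. First I would introduce the "error" sequence: given an approximate solution $(y_n)$ and a genuine solution $(x_n)$ of \eqref{nde}, write $z_n = x_n - y_n$. Then $(x_n)$ solves \eqref{nde} precisely when
\[
z_{n+1} = A(\sigma^n(\omega))z_n + g_n(z_n) + d_{n+1},
\]
where $g_n(z) = f_{\sigma^n(\omega)}(y_n + z) - f_{\sigma^n(\omega)}(y_n)$ is a Lipschitz map fixing $0$ with constant $c/K(\sigma^{n+1}(\omega))$, and $d_{n+1} = F_{\sigma^n(\omega)}(y_n) - y_{n+1}$ is the one-step error, which by \eqref{pseudo} satisfies $\lVert d_{n+1}\rVert \le \delta(n+1)/(2K(\sigma^{n+1}(\omega)))$. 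The unknown will be sought in the space $\mathcal{Z}$ of sequences $(z_n)_{n\in\Z}$ with
\[
\lVert (z_n)\rVert_* := \sup_{n\in\Z} \frac{\lVert z_n\rVert_{\sigma^n(\omega)}}{\delta(n)} < \infty,
\]
the weighting by $\delta(n)$ being exactly what makes the argument go through for $e^{\lambda-\epsilon}$-admissible sequences rather than only constant ones.

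Next I would solve the \emph{linear} inhomogeneous equation $z_{n+1} = A(\sigma^n(\omega))z_n + h_{n+1}$ by the standard dichotomy formula: using the projection $\Pi$, set
\[
z_n = \sum_{k\le n} \cA(\sigma^k(\omega), n-k)\Pi(\sigma^k(\omega))h_k - \sum_{k > n} \cA(\sigma^k(\omega), n-k)(\Id-\Pi(\sigma^k(\omega)))h_k,
\]
where the backward iterates are interpreted via the invertibility on $\Ker\Pi$. Call this operator $(\mathcal{T}h)_n$. Using \eqref{ln1} and \eqref{ln2} together with the definition of the adapted norm, one estimates $\lVert (\mathcal{T}h)_n\rVert_{\sigma^n(\omega)}$ by two geometric series in $e^{-\lambda}$ against $\lVert h_k\rVert_{\sigma^k(\omega)}$; pulling out the factor $\delta(k)/\delta(n)$, which by admissibility is at most $e^{(\lambda-\epsilon)|n-k|}$, converts these into geometric series with ratio $e^{-\epsilon}$. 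This yields
\[
\lVert \mathcal{T} h\rVert_* \le \frac{1}{1-e^{-\epsilon}}\big(1 + e^{-\epsilon}\big)\,\lVert (h_n)\rVert_*
\]
(up to the precise bookkeeping of indices), so $\mathcal{T}$ is a bounded operator on $\mathcal{Z}$, and any solution of $z_{n+1}=A z_n + h_{n+1}$ lying in $\mathcal{Z}$ is given by $\mathcal{T}h$.

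Then I would set up the fixed-point map $\Phi\colon \mathcal{Z}\to\mathcal{Z}$ by $\Phi(z) = \mathcal{T}(g(z) + d)$, where $g(z) = (g_n(z_n))$ and $d=(d_n)$. A fixed point of $\Phi$ is exactly the desired correction $(z_n)$. To see $\Phi$ is a contraction: since $g_n$ has Lipschitz constant $c/K(\sigma^{n+1}(\omega))$ and $\lVert \cdot\rVert \le \lVert\cdot\rVert_{\sigma^n(\omega)}$ while $\lVert\cdot\rVert_{\sigma^{n+1}(\omega)}\le 2K(\sigma^{n+1}(\omega))\lVert\cdot\rVert$ by \eqref{ln}, the map $g$ is Lipschitz on $\mathcal{Z}$ with constant comparable to $2c$ — but one must be careful to also extract the shift factor $\delta(n)/\delta(n+1)\le e^{\lambda-\epsilon}$ when passing the weight from level $n+1$ back to level $n$, which is the source of the $e^{\lambda-\epsilon}$ in \eqref{contraction}. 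Combining with the operator norm of $\mathcal{T}$, hypothesis \eqref{contraction} gives that $\Phi$ is a contraction. The Banach fixed point theorem produces a unique $z\in\mathcal{Z}$, and the bound $\lVert z_n\rVert \le \lVert z_n\rVert_{\sigma^n(\omega)} \le \lVert z\rVert_* \,\delta(n)$ together with the estimate $\lVert z\rVert_* \le (1-\kappa)^{-1}\lVert \mathcal{T}d\rVert_* \le (1-\kappa)^{-1}\cdot(\text{const})\cdot \sup_n \lVert d_n\rVert_{\sigma^n(\omega)}/\delta(n)$, and $\lVert d_n\rVert_{\sigma^n(\omega)} \le 2K(\sigma^n(\omega))\lVert d_n\rVert \le \delta(n)$, yields \eqref{shad} with $L$ depending only on $\lambda,\epsilon,c$.

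The main obstacle is the careful bookkeeping of the tempered factors $K(\sigma^n(\omega))$ and the admissibility weights $\delta(n)$: the whole point of the adapted norm $\lVert\cdot\rVert_\omega$ is that \eqref{ln1}–\eqref{ln2} hold with the \emph{uniform} rate $e^{-\lambda}$, so the nonuniformity disappears from the linear estimates, but it reappears in the nonlinear term through \eqref{non} — which is why the Lipschitz bound is stated with the $1/K(\sigma(\omega))$ factor — and in the size of $d_n$ through \eqref{pseudo}. Threading these factors so that everything collapses to the clean geometric series controlled by \eqref{contraction}, while simultaneously verifying that $\Phi$ genuinely maps $\mathcal{Z}$ into itself and not merely into a larger space, is where the real work lies; the rest is the standard Perron-type / Lyapunov–Perron fixed-point scheme.
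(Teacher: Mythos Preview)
Your proposal is correct and follows essentially the same Lyapunov--Perron scheme as the paper: the space $\mathcal Z$ is exactly the paper's $Y_{\delta,\infty}$, your Green operator $\mathcal T$ is the paper's $\Gamma_\omega$ (with the same bound $(1+e^{-\epsilon})/(1-e^{-\epsilon})$ obtained in the same way), and your fixed-point map $\Phi(z)=\mathcal T(g(z)+d)$ coincides with the paper's $T=\Gamma_\omega S$ once the error term is absorbed into the nonlinearity. The paper packages the self-map property by explicitly choosing a ball of radius $L$ solving $\frac{1+e^{-\epsilon}}{1-e^{-\epsilon}}+2ce^{\lambda-\epsilon}\frac{1+e^{-\epsilon}}{1-e^{-\epsilon}}L=L$ and checking $T(D)\subset D$, whereas you invoke the standard estimate $\lVert z\rVert_*\le (1-\kappa)^{-1}\lVert\Phi(0)\rVert_*$; both yield the same $L=L(\lambda,\epsilon,c)$.
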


\begin{proof}
We will split the proof into several lemmas. 
Let us begin by introducing some auxiliarly notation. Set
\[
Y_{\delta, \infty}:=\bigg{\{} \mathbf z=(z_n)_{n\in \Z}\subset X: \sup_{n\in \Z}\big{(}\delta (n)^{-1}\lVert z_n\rVert_{\sigma^n(\omega)} \big{)}<\infty \bigg{\}}.
\]
It is easy to verify that  $Y_{\delta, \infty}$ is a Banach space with respect to the norm
\[
\lVert \mathbf z\rVert_{\delta, \infty}:=\sup_{n\in \Z}\big{(}\delta (n)^{-1}\lVert z_n\rVert_{\sigma^n(\omega)} \big{)}.
\]
Furthermore, we define $\Gamma_\omega \colon Y_{\delta, \infty} \to Y_{\delta, \infty}$ by 
\[
\begin{split}
(\Gamma_\omega \mathbf z)_n &=\sum_{k=0}^\infty \cA(\sigma^{n-k}(\omega),k)\Pi(\sigma^{n-k}(\omega))z_{n-k} \\
&\phantom{=}-\sum_{k=1}^\infty \cA(\sigma^{n+k}(\omega), -k)(\Id-\Pi(\sigma^{n+k}(\omega)))z_{n+k}.
\end{split}
\]
We need the following auxiliarly result. 
\begin{lemma}\label{l2}
We have that $\Gamma_\omega$ is a well-defined and bounded linear operator on $Y_{\delta, \infty}$. Furthermore, 
\begin{equation}\label{bms}
\lVert \Gamma_\omega \rVert \le \frac{1+e^{-\epsilon}}{1-e^{-\epsilon}}.
\end{equation}
\end{lemma}

\begin{proof}[Proof of the lemma]
Obviously $\Gamma_\omega$ is linear. Moreover, 
observe that for each $\mathbf z=(z_n)_{n\in \Z} \in Y_{\delta, \infty}$, it follows from~\eqref{ln1} and  \eqref{ln2}  that 
\[
\begin{split}
\delta(n)^{-1}\lVert (\Gamma_\omega \mathbf z)_n\rVert_{\sigma^n (\omega)} &\le \delta(n)^{-1}\sum_{k=0}^\infty e^{-\lambda k}\lVert z_{n-k}\rVert_{\sigma^{n-k}(\omega)} \\
&\phantom{\le}+\delta(n)^{-1}\sum_{k=1}^\infty e^{-\lambda k}\lVert z_{n+k}\rVert_{\sigma^{n+k}(\omega)} \\
&=\sum_{k=0}^\infty e^{-\lambda k}\frac{\delta (n-k)}{\delta (n)}\delta(n-k)^{-1}\lVert z_{n-k}\rVert_{\sigma^{n-k}(\omega)} \\
&\phantom{=}+\sum_{k=1}^\infty e^{-\lambda k}\frac{\delta (n+k)}{\delta (n)}\delta (n+k)^{-1}\lVert z_{n+k}\rVert_{\sigma^{n+k}(\omega)}\\
&\le \sum_{k=0}^\infty e^{-\lambda k}e^{(\lambda-\epsilon)k}\delta(n-k)^{-1}\lVert z_{n-k}\rVert_{\sigma^{n-k}(\omega)} \\
&\phantom{\le}+\sum_{k=1}^\infty e^{-\lambda k} e^{(\lambda-\epsilon)k}\delta (n+k)^{-1}\lVert z_{n+k}\rVert_{\sigma^{n+k}(\omega)}\\
&\le \bigg{(} \sum_{k=0}^\infty e^{-\epsilon k}+\sum_{k=1}^\infty e^{-\epsilon k}\bigg{)}\lVert \mathbf z\rVert_{\delta, \infty}\\
&=\frac{1+e^{-\epsilon}}{1-e^{-\epsilon}}\lVert \mathbf z\rVert_{\delta, \infty}.
\end{split}
\]
Hence, by taking supremum over all $n\in \Z$, we obtain that
\[
\lVert \Gamma_\omega \mathbf z\rVert_{\delta, \infty} \le \frac{1+e^{-\epsilon}}{1-e^{-\epsilon}}\lVert \mathbf z\rVert_{\delta, \infty}.
\]
We conclude that $\Gamma_\omega$ is well-defined and bounded operator.  In addition, \eqref{bms} holds. 
\end{proof}

In the following lemma we explain the role of the operator $\Gamma_\omega$. 
\begin{lemma}\label{l3}
For $\omega\in \Omega'$ and $\mathbf z=(z_n)_{n\in \Z}, \mathbf w=(w_n)_{n\in \Z}\in Y_{\delta, \infty}$, the following statements are equivalent:
\begin{enumerate}
\item $\Gamma_\omega \mathbf z=\mathbf w$;
\item for $n\in \Z$, 
\begin{equation}\label{10:34}
w_n-A(\sigma^{n-1}(\omega))w_{n-1}=z_n.
\end{equation}
\end{enumerate}
\end{lemma}

\begin{proof}[Proof of the lemma]
Let us assume that $\Gamma_\omega \mathbf z=\mathbf w$. For each $n\in \Z$, we have that
\[
\begin{split}
&w_n-A(\sigma^{n-1}(\omega))w_{n-1} \\
&=\sum_{k=0}^\infty \cA(\sigma^{n-k}(\omega),k)\Pi(\sigma^{n-k}(\omega))z_{n-k}\\
&\phantom{=}-A(\sigma^{n-1}(\omega))\sum_{k=0}^\infty \cA(\sigma^{n-k-1}(\omega),k)\Pi(\sigma^{n-k-1}(\omega))z_{n-k-1}\\
&\phantom{=}-\sum_{k=1}^\infty \cA(\sigma^{n+k}(\omega), -k)(\Id-\Pi(\sigma^{n+k}(\omega)))z_{n+k}\\
&\phantom{=}+A(\sigma^{n-1}(\omega))\sum_{k=1}^\infty \cA(\sigma^{n+k-1}(\omega), -k)(\Id-\Pi(\sigma^{n+k-1}(\omega)))z_{n+k-1}\\
&=\sum_{k=0}^\infty \cA(\sigma^{n-k}(\omega),k)\Pi(\sigma^{n-k}(\omega))z_{n-k}\\
&\phantom{=}-\sum_{k=0}^\infty \cA(\sigma^{n-k-1}(\omega),k+1)\Pi(\sigma^{n-k-1}(\omega))z_{n-k-1}\\
&\phantom{=}-\sum_{k=1}^\infty \cA(\sigma^{n+k}(\omega), -k)(\Id-\Pi(\sigma^{n+k}(\omega)))z_{n+k}\\
&\phantom{=}+\sum_{k=1}^\infty \cA(\sigma^{n+k-1}(\omega), -(k-1))(\Id-\Pi(\sigma^{n+k-1}(\omega)))z_{n+k-1}\\
&=\Pi(\sigma^n (\omega))z_n+(\Id-\Pi(\sigma^n (\omega)))z_n \\
&=z_n.
\end{split}
\]
Thus, 
\[
w_n-A(\sigma^{n-1}(\omega))w_{n-1}=z_n, \quad \text{for $n\in \Z$.}
\]

Let us now establish the converse.  Assume that~\eqref{10:34} holds for each $n\in \Z$. Set
\[
w_n^s:=\Pi(\sigma^n(\omega))w_n \quad \text{and} \quad w_n^u:=w_n-w_n^s.
\]
It follows from~\eqref{proj} and~\eqref{10:34} that  
\[
\begin{split}
w_n^s &= \Pi(\sigma^n(\omega))A(\sigma^{n-1} (\omega))w_{n-1}+\Pi(\sigma^n(\omega))z_n \\
&=A(\sigma^{n-1} (\omega))\Pi(\sigma^{n-1} (\omega))w_{n-1}+\Pi(\sigma^n(\omega))z_n \\
&=A(\sigma^{n-1} (\omega))\Pi(\sigma^{n-1} (\omega)) A(\sigma^{n-2}(\omega))w_{n-2} \\
&\phantom{=}+A(\sigma^{n-1} (\omega))\Pi(\sigma^{n-1} (\omega))z_{n-1}+\Pi(\sigma^n(\omega))z_n \\
&=\cA(\sigma^{n-2}(\omega), 2)\Pi(\sigma^{n-2}(\omega))w_{n-2} \\
&\phantom{=}+\cA(\sigma^{n-1} (\omega), 1)\Pi(\sigma^{n-1} (\omega))z_{n-1}+\cA(\sigma^n (\omega), 0)\Pi(\sigma^n(\omega))z_n. \\
\end{split}
\]
Proceeding inductively, we find that that 
\[
\begin{split}
w_n^s &=\cA(\sigma^{n-k}(\omega), k)\Pi(\sigma^{n-k}(\omega))w_{n-k} \\
&\phantom{=}+\sum_{j=0}^{k-1}\cA(\sigma^{n-j} (\omega), j)\Pi(\sigma^{n-j} (\omega))z_{n-j},
\end{split}
\]
for each $k\in \mathbb N$. Passing to the limit when $k\to \infty$ and using~\eqref{ln1}, we conclude that
\begin{equation}\label{11:54}
w_n^s=\sum_{k=0}^\infty \cA(\sigma^{n-k}(\omega),k)\Pi(\sigma^{n-k}(\omega))z_{n-k}, \quad \text{for $n\in \Z$.}
\end{equation}
Similarly, one can show that
\begin{equation}\label{11:55}
w_n^u=-\sum_{k=1}^\infty \cA(\sigma^{n+k}(\omega), -k)(\Id-\Pi(\sigma^{n+k}(\omega)))z_{n+k}, \quad \text{for $n\in \Z$.}
\end{equation}
By~\eqref{11:54} and~\eqref{11:55}, we have that $\Gamma_\omega \mathbf z=\mathbf w$ and the proof of the lemma is complete. 
\end{proof}
For $n\in \Z$, we define $g_n\colon X\to X$ by 
\[
\begin{split}
g_n(x) &=f_{\sigma^n (\omega)}(x+y_n)-f_{\sigma^n (\omega)}(y_n)+F_{\sigma^n (\omega)}(y_n)-y_{n+1} \\
&=f_{\sigma^n (\omega)}(x+y_n)+A(\sigma^n(\omega))y_n-y_{n+1},
\end{split}
\]
for $x\in X$. Furthermore, 
for $\mathbf z=(z_n)_{n\in \Z}\in Y_{\delta, \infty}$, set
\[
(S(\mathbf z))_n:=g_{n-1}(z_{n-1}), \quad n\in \Z.
\]
We need the following estimate. 
\begin{lemma}\label{l4}
For $\mathbf z^i=(z_n^i)_{n\in  \Z} \in Y_{\delta, \infty}$, $i=1, 2$ we have that
\[
\lVert S(\mathbf z^1)-S(\mathbf z^2)\rVert_{\delta, \infty} \le 2ce^{\lambda-\epsilon}\lVert \mathbf z^1-\mathbf z^2\rVert_{\delta, \infty}.
\]
\end{lemma}

\begin{proof}[Proof of the lemma]
By~\eqref{ln} and \eqref{non}, we have that 
\[
\begin{split}
&\lVert S(\mathbf z^1)-S(\mathbf z^2)\rVert_{\delta, \infty} \\
&=\sup_{n\in \Z} \bigg{(}\delta (n)^{-1}\lVert g_{n-1}(z_{n-1}^1)-g_{n-1}(z_{n-1}^2)\rVert_{\sigma^n (\omega)} \bigg{)}\\
&=\sup_{n\in \Z} \bigg{(}\delta (n)^{-1}\lVert f_{\sigma^{n-1}(\omega)}(z_{n-1}^1+y_{n-1})-f_{\sigma^{n-1}(\omega)}(z_{n-1}^2+y_{n-1})\rVert_{\sigma^n (\omega)} \bigg{)}\\
&\le \sup_{n\in \Z}\bigg{(}2K(\sigma^n (\omega)) \delta (n)^{-1}\lVert f_{\sigma^{n-1}(\omega)}(z_{n-1}^1+y_{n-1})-f_{\sigma^{n-1}(\omega)}(z_{n-1}^2+y_{n-1})\rVert \bigg{)}\\
&\le \sup_{n\in \Z}\bigg{(}2K(\sigma^n (\omega)) \delta (n)^{-1}\frac{c}{K(\sigma^n (\omega))} \lVert z_{n-1}^1- z_{n-1}^2\rVert \bigg{)}\\
&\le \sup_{n\in \Z}\bigg{(}2c\delta (n)^{-1}\lVert z_{n-1}^1- z_{n-1}^2\rVert_{\sigma^{n-1}(\omega)} \bigg{)}\\
&=\sup_{n\in \Z}\bigg{(}2c\frac{\delta(n-1)}{\delta (n)} \delta (n-1)^{-1}\lVert z_{n-1}^1- z_{n-1}^2\rVert_{\sigma^{n-1}(\omega)} \bigg{)}\\
&\le 2ce^{\lambda-\epsilon}\sup_{n\in \Z}\bigg{(} \delta (n-1)^{-1}\lVert z_{n-1}^1- z_{n-1}^2\rVert_{\sigma^{n-1}(\omega)} \bigg{)}\\
&=2ce^{\lambda-\epsilon}\lVert \mathbf z^1-\mathbf z^2\rVert_{\delta, \infty},
\end{split}
\]
and the conclusion of the lemma follows. 
\end{proof}
For $\mathbf z\in Y_{\delta, \infty}$, let
\begin{equation}\label{eq: op T}
T(\mathbf z)=\Gamma_\omega S(\mathbf z).
\end{equation}
It follows from Lemmas~\ref{l2} and~\ref{l4} that 
\begin{equation}\label{954}
\lVert T(\mathbf z^1)-T(\mathbf z^2)\rVert_{\delta, \infty} \le 2ce^{\lambda-\epsilon} \frac{1+e^{-\epsilon}}{1-e^{-\epsilon}} \lVert \mathbf z^1-\mathbf z^2\rVert_{\delta, \infty}, \quad  \text{for $\mathbf z^i \in Y_{\delta, \infty}$, $i=1,2$.}
\end{equation}
Let $L>0$ be such that it satisfies
\[
 \frac{1+e^{-\epsilon}}{1-e^{-\epsilon}}+2ce^{\lambda-\epsilon} \frac{1+e^{-\epsilon}}{1-e^{-\epsilon}}L=L.
\]
Set
\[
D:=\{ \mathbf z\in Y_{\delta, \infty}: \lVert \mathbf z\rVert_{\delta, \infty} \le L\}.
\]
The final ingredient of the proof is the following lemma.
\begin{lemma}\label{l5}
We have that  $T(D)\subset D$.
\end{lemma}

\begin{proof}[Proof of the lemma]
We begin by observing that~\eqref{ln} and~\eqref{pseudo} imply that 
\[
\begin{split}
\lVert S(\mathbf 0)\rVert_{\delta, \infty}&=\sup_{n\in \Z} (\delta(n)^{-1}\lVert y_n-F_{\sigma^{n-1}(\omega)}(y_{n-1})\rVert_{\sigma^n (\omega)})\\
&\le  \sup_{n\in \Z} (\delta(n)^{-1}2K(\sigma^n (\omega))\lVert y_n-F_{\sigma^{n-1}(\omega)}(y_{n-1})\rVert)\\
&\le 1. 
\end{split}
\]
Hence, it follows from~\eqref{954} that for any $\mathbf z\in D$, we have that 
\[
\begin{split}
\lVert T(\mathbf z)\rVert_{\delta, \infty} &\le \lVert T(\mathbf 0)\rVert_{\delta, \infty}+\lVert T(\mathbf z)-T(\mathbf 0)\rVert_{\delta, \infty} \\
&\le \lVert \Gamma_\omega \rVert \cdot \lVert S(\mathbf 0)\rVert_{\delta, \infty}+\lVert T(\mathbf z)-T(\mathbf 0)\rVert_{\delta, \infty} \\
&\le \frac{1+e^{-\epsilon}}{1-e^{-\epsilon}}+2ce^{\lambda-\epsilon} \frac{1+e^{-\epsilon}}{1-e^{-\epsilon}} \lVert \mathbf z\rVert_{\delta, \infty}\\
&\le \frac{1+e^{-\epsilon}}{1-e^{-\epsilon}}+2ce^{\lambda-\epsilon} \frac{1+e^{-\epsilon}}{1-e^{-\epsilon}}L\\
&=L,
\end{split}
\]
and thus the desired conclusion holds. 
\end{proof}
By~\eqref{954} and Lemma~\ref{l5}, we have that $T$ is  a contraction on $D$ and thus it has a unique fixed point $\mathbf z=(z_n)_{n\in \Z}\in D$. Hence, $\Gamma_\omega S(\mathbf z)=\mathbf z$ and consequently Lemma~\ref{l3} implies that 
\[
\begin{split}
&z_n-A(\sigma^{n-1}(\omega))z_{n-1} \\
&=f_{\sigma^{n-1} (\omega)}(z_{n-1}+y_{n-1})-f_{\sigma^{n-1} (\omega)}(y_{n-1})+F_{\sigma^{n-1} (\omega)}(y_{n-1})-y_{n} \\
&=f_{\sigma^{n-1} (\omega)}(z_{n-1}+y_{n-1})+A(\sigma^{n-1}(\omega))y_{n-1}-y_n,
\end{split}
\]
for each $n\in \Z$. Therefore, the sequence $(x_n)_{n\in \Z}$ defined by
\[
x_n=y_n+z_n \quad n\in \Z,
\]
is a solution of~\eqref{nde}. Moreover, we have (using~\eqref{ln}) that 
\[
\sup_{n\in \Z} (\delta (n)^{-1}\lVert x_n-y_n\rVert) \le \sup_{n\in \Z} (\delta (n)^{-1}\lVert x_n-y_n \rVert_{\sigma^n (\omega)})=\lVert \mathbf z\rVert_{\delta, \infty} \le L, 
\]
which readily implies~\eqref{shad}.
\end{proof}

Let us discuss the relationship between Theorem \ref{t1} and previous results available in the literature.
\begin{remark}\label{remark 2}
Along with results due to Katok, Hirayama and many others (see \cite{BP07,Pal00,Pil99}), Theorem \ref{t1} can be regarded as a version of the shadowing property for nonuniformly hyperbolic dynamics. Nevertheless, to the best of our knowledge, in all the previous results there are some stronger assumptions like differentiability, invertibility, boundedness and/or compactness or finite dimensionality, that are not present in Theorem \ref{t1}. Moreover, the error allowed in the one step iteration of the dynamics \eqref{pseudo} is not necessarily uniform over time as it is in the usual shadowing type results.  For instance, if $\varepsilon<\lambda$, the sequence $\delta(n)=e^{(\lambda-\varepsilon)|n|}$ is $e^{\lambda-\varepsilon}$-admissible. Consequently,  the error allowed in \eqref{pseudo} can even grow exponentially fast. In particular, Theorem \ref{t1} extends previous results even in the nonautonomous context and under a uniform exponential dichotomy assumption (see Section \ref{sec: settings} for applications to these settings). 
\end{remark}

As a consequence of the previous construction we get that the solution of \eqref{nde} is actually unique whenever we require that the deviation of the pseudotrajectory from the true one is small with respect to the adapted norm. Indeed, we have the following result. 
\begin{corollary}\label{cor: shad new norm}
Suppose we are in the hypothesis of Theorem \ref{t1} and let $(y_n)_{n\in \Z}$ be a sequence satisfying \eqref{pseudo}. Then, there exists a solution $(x_n)_{n\in \Z}$ of ~\eqref{nde} satisfying 
\begin{equation*}
\lVert x_n-y_n\rVert _{\sigma^n(\omega)} \le L\delta (n), \quad \text{for each $n\in \Z$.}
\end{equation*}
Moreover, this solution is unique.
\end{corollary}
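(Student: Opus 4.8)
The plan is to extract both assertions from the fixed-point machinery already built in the proof of Theorem~\ref{t1}. The existence part requires no new work: the solution produced there is $x_n=y_n+z_n$, where $\mathbf z=(z_n)_{n\in\Z}$ is the fixed point of $T=\Gamma_\omega S$ lying in $D=\{\mathbf z\in Y_{\delta,\infty}:\lVert\mathbf z\rVert_{\delta,\infty}\le L\}$. By definition of $\lVert\cdot\rVert_{\delta,\infty}$, the inequality $\lVert\mathbf z\rVert_{\delta,\infty}\le L$ says precisely that $\lVert x_n-y_n\rVert_{\sigma^n(\omega)}\le L\delta(n)$ for every $n\in\Z$, which is the claimed bound.

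For uniqueness I would first observe that the contraction estimate \eqref{954}, together with hypothesis \eqref{contraction}, shows that $T$ is a contraction on the \emph{entire} Banach space $Y_{\delta,\infty}$, not merely on $D$. Indeed $S$ maps $Y_{\delta,\infty}$ into itself, since for $\mathbf z\in Y_{\delta,\infty}$ one has, by Lemma~\ref{l4} and the computation $\lVert S(\mathbf 0)\rVert_{\delta,\infty}\le 1$ carried out in the proof of Lemma~\ref{l5},
\[
\lVert S(\mathbf z)\rVert_{\delta,\infty}\le \lVert S(\mathbf 0)\rVert_{\delta,\infty}+2ce^{\lambda-\epsilon}\lVert\mathbf z\rVert_{\delta,\infty}\le 1+2ce^{\lambda-\epsilon}\lVert\mathbf z\rVert_{\delta,\infty}<\infty,
\]
and $\Gamma_\omega$ is bounded by Lemma~\ref{l2}. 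Hence $T$ has a \emph{unique} fixed point in $Y_{\delta,\infty}$, which necessarily coincides with the one found in $D$.

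It then remains to match fixed points of $T$ in $Y_{\delta,\infty}$ with solutions of \eqref{nde} whose deviation from $(y_n)$ lies in $Y_{\delta,\infty}$. Given such a solution $(x_n)$, set $z_n=x_n-y_n$; then $\mathbf z\in Y_{\delta,\infty}$ and, by the previous paragraph, $S(\mathbf z)\in Y_{\delta,\infty}$, so Lemma~\ref{l3} applies to the pair $(S(\mathbf z),\mathbf z)$. Rewriting $x_n=F_{\sigma^{n-1}(\omega)}(x_{n-1})$ in terms of $z_n$ shows it is equivalent to $z_n-A(\sigma^{n-1}(\omega))z_{n-1}=g_{n-1}(z_{n-1})=(S(\mathbf z))_n$ for all $n\in\Z$, which by Lemma~\ref{l3} is in turn equivalent to $\Gamma_\omega S(\mathbf z)=\mathbf z$, i.e. $T(\mathbf z)=\mathbf z$. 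Since any solution with $\lVert x_n-y_n\rVert_{\sigma^n(\omega)}\le L\delta(n)$ for all $n$ automatically satisfies $\mathbf z\in D\subset Y_{\delta,\infty}$, uniqueness of the fixed point of $T$ forces any two such solutions to agree, completing the proof.

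The one point that needs care — the ``main obstacle'', though a mild one — is this bookkeeping: one must know that $S(\mathbf z)\in Y_{\delta,\infty}$ whenever $\mathbf z\in Y_{\delta,\infty}$ in order for Lemma~\ref{l3} to be applicable, and it is exactly this fact that both makes the solution/fixed-point correspondence valid for \emph{every} element of $Y_{\delta,\infty}$ (with no a priori smallness beyond membership in $Y_{\delta,\infty}$) and makes the contraction in \eqref{954} genuinely global rather than confined to $D$.
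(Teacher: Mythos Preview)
Your proof is correct and follows the same fixed-point strategy as the paper: show that the deviation $\mathbf z=(x_n-y_n)_{n\in\Z}$ of any admissible solution is a fixed point of $T$, and invoke uniqueness of that fixed point. Two minor differences are worth noting. First, to verify $T(\mathbf z)=\mathbf z$ the paper recomputes $\Gamma_\omega S(\mathbf z)$ by hand via a telescoping sum, whereas you appeal directly to Lemma~\ref{l3}; your route is cleaner and avoids duplicating that calculation. Second, you observe that $T$ is in fact a contraction on all of $Y_{\delta,\infty}$ (not merely on $D$), which yields a marginally stronger uniqueness statement than the paper actually uses; the paper is content with uniqueness inside $D$, which already suffices since the hypothesis $\lVert x_n-y_n\rVert_{\sigma^n(\omega)}\le L\delta(n)$ places $\mathbf z$ in $D$.
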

\begin{proof}
Existence of such a solution follows readily from the proof of Theorem \ref{t1}. So, it remains to observe that it is unique.

Let $(x_n)_{n\in \Z}$ be a solution of ~\eqref{nde} associated to $(y_n)_{n\in \Z}$ by the ``existence part" and consider $\textbf w=(w_n)_{n\in \Z}$ given by $w_n=x_n-y_n$. We start observing that $\textbf w$ is a fixed point for the operator $T$ given by \eqref{eq: op T}. Indeed, 
\begin{displaymath}
\begin{split}
\left( S(\mathbf w) \right)_n &=  g_{n-1}(w_{n-1}) \\
&=f_{\sigma^{n-1}(\omega)}(w_{n-1}+y_{n-1})-f_{\sigma^{n-1}(\omega)}(y_{n-1})+F_{\sigma^{n-1}(\omega)}(y_{n-1})-y_n\\
&=f_{\sigma^{n-1}(\omega)}(x_{n-1})+A(\sigma^{n-1}(\omega))y_{n-1}-y_n\\
&=f_{\sigma^{n-1}(\omega)}(x_{n-1})+A(\sigma^{n-1}(\omega))x_{n-1}-A(\sigma^{n-1}(\omega))w_{n-1}-y_n\\
&=x_n-A(\sigma^{n-1}(\omega))w_{n-1}-y_n\\
&=w_n-A(\sigma^{n-1}(\omega))w_{n-1}.
\end{split}
\end{displaymath}
Consequently, using property \eqref{proj},
\begin{displaymath}
\begin{split}
& \left(\Gamma_\omega(S(\mathbf w))\right)_n  \\
&=\sum_{k=0}^\infty \cA(\sigma^{n-k}(\omega),k)\Pi(\sigma^{n-k}(\omega))(S(\mathbf w))_{n-k}  \\
&-\sum_{k=1}^\infty \cA(\sigma^{n+k}(\omega),-k)\Pi(\sigma^{n+k}(\omega))(S(\mathbf w))_{n+k}  \\
&=\sum_{k=0}^\infty \cA(\sigma^{n-k}(\omega),k)\Pi(\sigma^{n-k}(\omega))\left(w_{n-k}-A(\sigma^{n-k-1}(\omega))w_{n-k-1}\right)  \\
&-\sum_{k=1}^\infty \cA(\sigma^{n+k}(\omega),-k)\left(\Id-\Pi(\sigma^{n+k}(\omega))\right)\left(w_{n+k}-A(\sigma^{n+k-1}(\omega))w_{n+k-1}\right)   \\
&= \Pi(\sigma^{n}(\omega))w_{n}+\left(\Id-\Pi(\sigma^{n}(\omega))\right)w_{n}\\
&= w_n.
\end{split}
\end{displaymath}
Therefore, recalling that $T(\textbf w)=\Gamma_\omega(S(\textbf w))$, it follows that
\begin{displaymath}
T(\textbf w) = \mathbf w
\end{displaymath}
as claimed.

Moreover,
\begin{displaymath}
\begin{split}
\|\textbf w\|_{\delta,\infty}&=\sup_{n\in \Z} (\delta (n)^{-1}\lVert w_n \rVert_{\sigma^n (\omega)})\\
&=\sup_{n\in \Z} (\delta (n)^{-1}\lVert x_n-y_n \rVert_{\sigma^n (\omega)})\\
&\leq L.
\end{split}
\end{displaymath}
Consequently, since $T$ is a contraction from $ D=\{ \mathbf z\in Y_{\delta, \infty}: \lVert \mathbf z\rVert_{\delta, \infty} \le L\}$ to itself and, in particular, its fixed point in $D$ is unique, the result follows.
\end{proof}

\begin{corollary}[Expansivity] \label{cor: expansivity}
Suppose that the hypothesis of Theorem~\ref{t1} hold and let $(x_n)_{n\in \Z}$ and $(y_n)_{n\in \Z}$ be solutions of ~\eqref{nde} satisfying 
\begin{equation*}
\lVert x_n-y_n\rVert _{\sigma^n(\omega)} \le L\delta (n), \quad \text{for each $n\in \Z$.}
\end{equation*}
Then, $(x_n)_{n\in \Z}=(y_n)_{n\in \Z}$.
\end{corollary}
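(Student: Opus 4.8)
The plan is to reduce the statement to the uniqueness part of Corollary~\ref{cor: shad new norm}. The key observation is that a genuine solution of~\eqref{nde} is, in particular, a pseudotrajectory with zero one-step error, hence it satisfies~\eqref{pseudo} for any $e^{\lambda-\epsilon}$-admissible sequence $\delta$.

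First I would fix the $e^{\lambda-\epsilon}$-admissible sequence $\delta$ appearing in the hypothesis and regard $(y_n)_{n\in\Z}$ as the reference sequence. Since $(y_n)$ solves~\eqref{nde}, we have $y_n-F_{\sigma^{n-1}(\omega)}(y_{n-1})=0$ for every $n\in\Z$, so $(y_n)$ trivially satisfies~\eqref{pseudo}. Corollary~\ref{cor: shad new norm} then applies and yields a \emph{unique} solution $(z_n)_{n\in\Z}$ of~\eqref{nde} with $\lVert z_n-y_n\rVert_{\sigma^n(\omega)}\le L\delta(n)$ for all $n\in\Z$. Now I would observe that $(z_n)=(y_n)$ satisfies this bound trivially (the left-hand side is $0$), and that $(z_n)=(x_n)$ satisfies it by hypothesis; by the uniqueness just obtained, $(x_n)=(y_n)$.

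Alternatively, and essentially equivalently, one can argue directly with the fixed-point operator $T=\Gamma_\omega S$ from the proof of Theorem~\ref{t1}, taking $(y_n)$ in the role of the pseudotrajectory. Setting $w_n=x_n-y_n$, the computation carried out in the proof of Corollary~\ref{cor: shad new norm} shows that $\mathbf w=(w_n)_{n\in\Z}$ is a fixed point of $T$, while the assumed bound $\lVert x_n-y_n\rVert_{\sigma^n(\omega)}\le L\delta(n)$ means precisely $\lVert\mathbf w\rVert_{\delta,\infty}\le L$, i.e. $\mathbf w\in D$. Since $(y_n)$ is a solution we have $S(\mathbf 0)=\mathbf 0$, so the zero sequence is also a fixed point of $T$ lying in $D$; the contraction estimate~\eqref{954} on $D$ then forces $\mathbf w=\mathbf 0$, that is, $(x_n)=(y_n)$.

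There is essentially no obstacle beyond recognizing that ``a solution close to a solution'' is the special case ``a solution close to a pseudotrajectory''; the only mild point requiring care is that the sequence $\delta$ in the statement is indeed $e^{\lambda-\epsilon}$-admissible so that Corollary~\ref{cor: shad new norm} (equivalently, Theorem~\ref{t1}) is applicable, which is part of the standing hypotheses inherited from Theorem~\ref{t1}.
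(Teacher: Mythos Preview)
Your first argument is exactly the paper's proof: observe that $(y_n)$ is a solution, hence trivially satisfies~\eqref{pseudo}, and then invoke the uniqueness in Corollary~\ref{cor: shad new norm} using that both $(y_n)$ and $(x_n)$ shadow $(y_n)$. The alternative fixed-point argument you sketch is also fine (it just unwinds the uniqueness part of that corollary), but it adds nothing new.
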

\begin{proof}
Obviously $(y_n)_{n\in \Z}$ satisfies \eqref{pseudo} and moreover it is shadowed by itself and by $(x_n)_{n\in \Z}$. Thus, uniqueness given by the previous corollary implies $(x_n)_{n\in \Z}=(y_n)_{n\in \Z}$ as claimed.
\end{proof}

\begin{corollary}\label{cor1}
Assume that $\cA$ admits a tempered exponential dichotomy and let $\Omega'\subset \Omega$, $\lambda >0$ and $K\colon \Omega \to (0, \infty)$ be as in the Definition~\ref{xxv}. Furthermore, suppose that $c\ge 0$ satisfies
\begin{equation}\label{contraction2}
2c\frac{1+e^{-\lambda}}{1-e^{-\lambda}}<1,
\end{equation}
and that~\eqref{non} holds.
There exists $L=L(\lambda, c)>0$ such that for any $t>0$,   $\omega \in \Omega'$ and a sequence  $(y_n)_{n\in \Z}\subset X$ such that
\[
\lVert y_n-F_{\sigma^{n-1}(\omega)}(y_{n-1})\rVert \le \frac{t}{2K(\sigma^n (\omega))} \quad \text{for $n\in \Z$,}
\]
then there exists a solution $(x_n)_{n\in \Z}$ of~\eqref{nde} with the property that
\[
\lVert x_n-y_n\rVert \le Lt, \quad \text{for each $n\in \Z$.}
\]
\end{corollary}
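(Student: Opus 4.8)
The plan is to deduce Corollary~\ref{cor1} directly from Theorem~\ref{t1} by specializing the parameters; no new argument is needed. The key observation is that the boundary regime $\epsilon = \lambda$ (which is permitted, since Theorem~\ref{t1} only requires $\epsilon \le \lambda$) collapses the family of $e^{\lambda-\epsilon}$-admissible error sequences to the constant ones, and simultaneously turns the contraction condition~\eqref{contraction} into~\eqref{contraction2}.

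First I would set $\epsilon := \lambda$ in Theorem~\ref{t1}. Then $e^{\lambda-\epsilon} = 1$, so~\eqref{contraction} reads $2c\frac{1+e^{-\lambda}}{1-e^{-\lambda}} < 1$, which is precisely the assumed inequality~\eqref{contraction2}; moreover the Lipschitz hypothesis~\eqref{non} is assumed in the corollary verbatim. Next, given $t>0$, I would take the constant sequence $\delta(n) := t$ for all $n\in\Z$. Since $\delta(n+1)/\delta(n) = \delta(n)/\delta(n+1) = 1 = e^{\lambda-\epsilon}$, this sequence is $e^{\lambda-\epsilon}$-admissible, so Theorem~\ref{t1} applies to it for every $\omega\in\Omega'$.

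With these choices the standing hypothesis~\eqref{pseudo} of Theorem~\ref{t1} becomes exactly $\lVert y_n - F_{\sigma^{n-1}(\omega)}(y_{n-1})\rVert \le \frac{t}{2K(\sigma^n(\omega))}$ for $n\in\Z$, and its conclusion~\eqref{shad} becomes $\lVert x_n - y_n\rVert \le Lt$ for each $n\in\Z$, which is the asserted estimate. Finally, the constant $L = L(\lambda,\epsilon,c)$ supplied by Theorem~\ref{t1}, evaluated at $\epsilon = \lambda$, depends only on $\lambda$ and $c$, giving $L = L(\lambda,c)$ as claimed; in particular $L$ does not depend on $t$, since the constant sequences of all heights $t$ are simultaneously $1$-admissible with the same admissibility constant.

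There is essentially no obstacle here: the only point requiring (trivial) verification is that Theorem~\ref{t1} indeed admits the equality case $\epsilon = \lambda$, which it does, and that the constant in Theorem~\ref{t1} is uniform in the choice of admissible sequence $\delta$ of a given admissibility constant, which is clear from its proof (it is determined solely by $\lambda,\epsilon,c$ through the equation defining $L$ there).
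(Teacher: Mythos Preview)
Your proposal is correct and follows exactly the paper's own proof: apply Theorem~\ref{t1} with $\epsilon=\lambda$ and the constant sequence $\delta(n)=t$, noting that this sequence is $1$-admissible and that~\eqref{contraction} reduces to~\eqref{contraction2}. Your additional remark that $L$ is independent of $t$ is a welcome clarification but otherwise there is nothing to add.
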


\begin{proof}
It only remains to apply Theorem~\ref{t1} in the particular case when $\delta \colon \Z \to (0, \infty)$ is a constant map $\delta(n)=t$, $n\in \Z$. Indeed, observe that $\delta$ is an $1$-admissible sequence and thus the desired conclusion follows from Theorem~\ref{t1} applied  for $\epsilon=\lambda$ (observe that in this case~\eqref{contraction} and~\eqref{contraction2} 
coincide).
\end{proof}

We stress that Theorem~\ref{t1} in particular applies to linear dynamics
\begin{equation}\label{ldyn}
x_{n+1}=A(\sigma^n (\omega))x_n, \quad n\in \Z.
\end{equation}

\begin{corollary}
Assume that $\cA$ admits a tempered exponential dichotomy and let $\Omega'\subset \Omega$, $\lambda >0$ and $K\colon \Omega \to (0, \infty)$ be as in the Definition~\ref{xxv}.
Furthermore, suppose that $0<\epsilon \le \lambda$. 
Then, there exists $L=L(\lambda, \epsilon)>0$ such that for any  $\omega \in \Omega'$ and a sequence  $(y_n)_{n\in \Z}\subset X$ such that
\begin{equation*}
\lVert y_n-A(\sigma^{n-1}(\omega))y_{n-1}\rVert \le \frac{\delta (n)}{2K(\sigma^n (\omega))} \quad \text{for $n\in \Z$,}
\end{equation*}
 there is a solution $(x_n)_{n\in \Z}$ of~\eqref{ldyn} with the property that~\eqref{shad} holds.
\end{corollary}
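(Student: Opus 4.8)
The plan is to deduce this directly from Theorem~\ref{t1} by specializing to the case of a vanishing nonlinear perturbation. First I would observe that the linear dynamics~\eqref{ldyn} is nothing but~\eqref{nde} with the choice $f_\omega \equiv 0$ for every $\omega \in \Omega'$: then $F_\omega = A(\omega) + f_\omega = A(\omega)$, and the relation $x_{n+1} = F_{\sigma^n(\omega)}(x_n)$ becomes exactly $x_{n+1} = A(\sigma^n(\omega))x_n$. In particular, a two-sided sequence is a solution of~\eqref{ldyn} if and only if it is a solution of~\eqref{nde} for this trivial family $(f_\omega)_{\omega \in \Omega}$.

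Next I would check that the hypotheses of Theorem~\ref{t1} hold with $c = 0$. The Lipschitz bound~\eqref{non} then reads $\lVert f_\omega(x) - f_\omega(y)\rVert \le 0$ for $\omega \in \Omega'$ and $x, y \in X$, which is trivially true since $f_\omega$ is identically zero; the smallness condition~\eqref{contraction} reduces to $0 < 1$ and thus holds for every admissible pair $(\lambda, \epsilon)$; and the constraint $0 < \epsilon \le \lambda$ is assumed in the statement. Hence Theorem~\ref{t1} applies and, for each $e^{\lambda-\epsilon}$-admissible sequence $\delta$, furnishes a constant $L = L(\lambda, \epsilon, 0)$; since $c = 0$ is now fixed (and in fact one reads off from the proof of Theorem~\ref{t1} that here $L = \frac{1+e^{-\epsilon}}{1-e^{-\epsilon}}$), we may simply write this constant as $L(\lambda, \epsilon)$.

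Finally, I would note that, under the identification above, the pseudotrajectory condition~\eqref{pseudo} of Theorem~\ref{t1} becomes precisely $\lVert y_n - A(\sigma^{n-1}(\omega))y_{n-1}\rVert \le \delta(n)/(2K(\sigma^n(\omega)))$ for all $n \in \Z$, which is exactly the hypothesis imposed on $(y_n)_{n\in\Z}$ here. Therefore Theorem~\ref{t1} yields a solution $(x_n)_{n\in\Z}$ of~\eqref{nde}, that is, of~\eqref{ldyn}, with $\lVert x_n - y_n\rVert \le L\delta(n)$ for every $n \in \Z$, which is~\eqref{shad}. There is no genuine obstacle in this argument; the only point requiring a little care is the bookkeeping, namely confirming that the dependence of the constant on $c$ drops out when $c = 0$ and that the class of admissible error sequences $\delta$ over which one quantifies is the same $e^{\lambda-\epsilon}$-admissible class as in Theorem~\ref{t1}.
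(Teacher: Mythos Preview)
Your proposal is correct and follows exactly the same approach as the paper: apply Theorem~\ref{t1} with $f_\omega \equiv 0$, so that $c=0$ and~\eqref{contraction} is trivially satisfied. Your write-up is simply more detailed than the paper's two-line proof, and your parenthetical identification of the explicit constant $L = \frac{1+e^{-\epsilon}}{1-e^{-\epsilon}}$ is a correct bonus observation.
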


\begin{proof}
The desired conclusion follows directly from Theorem~\ref{t1} applied to the case when $f_\omega=0$. In this case $c=0$ and consequently~\eqref{contraction} is trivially satisfied. 
\end{proof}
We also have the following version of Corollary~\ref{cor1}.
\begin{corollary}\label{2:59}
Assume that $\cA$ admits a tempered exponential dichotomy and let $\Omega'\subset \Omega$, $\lambda >0$ and $K\colon \Omega \to (0, \infty)$ be as in the Definition~\ref{xxv}.
Then, there exists $L=L(\lambda)>0$ such that for 
any $t>0$,  $\omega \in \Omega'$ and a sequence  $(y_n)_{n\in \Z}\subset X$ such that
\[
\lVert y_n-A(\sigma^{n-1}(\omega))y_{n-1} \rVert \le \frac{t}{2K(\sigma^n (\omega))} \quad \text{for $n\in \Z$,}
\]
then there exists a solution $(x_n)_{n\in \Z}$ of~\eqref{ldyn} with the property that
\[
\lVert x_n-y_n\rVert \le Lt, \quad \text{for each $n\in \Z$.}
\]
\end{corollary}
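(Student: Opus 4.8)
The plan is to obtain this statement directly from Corollary~\ref{cor1} by specializing to the linear case $f_\omega\equiv 0$. First I would observe that choosing $f_\omega=0$ for every $\omega\in\Omega'$ makes the Lipschitz hypothesis~\eqref{non} hold with the constant $c=0$. With this choice the contraction requirement~\eqref{contraction2} becomes $0<1$, hence is automatically satisfied, so that all the hypotheses of Corollary~\ref{cor1} are in force.

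Next I would note that with $f_\omega=0$ the nonlinear dynamics~\eqref{nde} reduces precisely to the linear dynamics~\eqref{ldyn}, since $F_\omega=A(\omega)+f_\omega=A(\omega)$. In particular, the assumed pseudo-orbit bound $\lVert y_n-A(\sigma^{n-1}(\omega))y_{n-1}\rVert\le t/(2K(\sigma^n(\omega)))$ is exactly the hypothesis $\lVert y_n-F_{\sigma^{n-1}(\omega)}(y_{n-1})\rVert\le t/(2K(\sigma^n(\omega)))$ appearing in Corollary~\ref{cor1} for the constant sequence $\delta(n)=t$. Applying Corollary~\ref{cor1} then yields a solution $(x_n)_{n\in\Z}$ of~\eqref{ldyn} satisfying $\lVert x_n-y_n\rVert\le Lt$ for all $n\in\Z$, where the constant is $L=L(\lambda,c)$ with $c=0$, hence depends on $\lambda$ alone; this is the asserted $L=L(\lambda)$.

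There is no genuine obstacle here: the whole argument is a substitution. The only point that deserves a (routine) check is that the constant provided is really independent of everything but $\lambda$. Tracing back through Corollary~\ref{cor1} and the proof of Theorem~\ref{t1} with $\epsilon=\lambda$, the constant $L$ is determined by the relation $\frac{1+e^{-\lambda}}{1-e^{-\lambda}}+2ce^{\lambda-\epsilon}\frac{1+e^{-\lambda}}{1-e^{-\lambda}}L=L$, and upon setting $c=0$ this collapses to $L=\frac{1+e^{-\lambda}}{1-e^{-\lambda}}$, confirming the claimed dependence and completing the proof.
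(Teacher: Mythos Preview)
Your proposal is correct and matches the paper's intended approach: the paper states Corollary~\ref{2:59} without proof, introducing it simply as ``the following version of Corollary~\ref{cor1},'' and your specialization $f_\omega\equiv 0$, $c=0$ is precisely the derivation this phrasing points to. Your explicit computation of $L=\frac{1+e^{-\lambda}}{1-e^{-\lambda}}$ is a nice bonus that the paper does not spell out.
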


\section{Conservation of Lyapunov exponents} \label{sec: conservation}
In this section we present an application of our results to the theory of Lyapunov exponents. More precisely, we prove that Lyapunov exponents associated with a random linear dynamics admitting a tempered exponential dichotomy remain unchanged under small nonlinear perturbations.

In order to simplify exposition, we will restrict ourselves to the case when $X=\mathbb{R}^d$ and $A:\Omega \to GL(d,\R)$. Moreover, we consider the linear cocycle $\cA \colon \Omega \times \Z \to GL(d,\R)$ defined by
\begin{displaymath}
\cA(\omega, n)=\begin{cases}
A(\sigma^{n-1}(\omega)) \cdots A(\sigma (\omega))A(\omega)  & \text{if $n\geq 1$;}\\
\Id & \text{if $n=0$;}\\
A(\sigma^{-|n|}(\omega))^{-1}\cdots A(\sigma^{-2}(\omega))^{-1}A(\sigma^{-1} (\omega))^{-1}  & \text{if $n<0$.}
\end{cases}
\end{displaymath}

Assuming $\log ^+\|A(\omega)^{\pm 1}\|\in L^1(\mathbb{P})$, it follows from the \emph{Oseledets theorem}~\cite{Osel} that there exist numbers $\lambda _1(\cA,\mathbb{P})>\ldots > \lambda _{k}(\cA,\mathbb{P})$, called the \emph{Lyapunov exponents}, and a decomposition $\mathbb{R}^d=E^1_{\omega}\oplus \ldots \oplus E^k_{\omega}$, called the \emph{Oseledets splitting}, into vector subspaces depending measurably on $\omega$ such that for $\mathbb{P}$-almost every $\omega \in \Omega$,
\begin{equation}\label{eq: Lyap exp}
A(\omega)E^i_{\omega}=E^i_{\sigma(\omega)} \; \textrm{and} \; \lambda _i(\cA,\mathbb{P}) =\lim _{n\to \pm  \infty} \dfrac{1}{n}\log \| \cA(\omega,n)x\| 
\end{equation}
for every non-zero $x\in E^i_{\omega}$ and $1\leq i \leq k$ (see for instance \cite{Via14}). By diminishing $\Omega '$ given in Definition \ref{xxv}, if necessary, we may assume that all these claims hold true for every $\omega\in \Omega'$.

From now on assume we are in the hypothesis of Theorem \ref{t1}. Moreover, assume the sequence $\delta:\Z\to (0,+\infty)$ grows sub-exponentially, that is,
\begin{equation}\label{eq: sub-exp}
\lim_{n\to \pm \infty} \frac{1}{n}\log \delta(n) =0
\end{equation}
and that the nonlinear perturbations $f_\omega :\R^d \to \R^d$, $\omega \in \Omega$, are small in the sense that
\begin{equation}\label{eq: bound f Lyap}
\|f_{\sigma^n(\omega)}(x)\|\le \frac{\delta(n)}{4K(\sigma^n(\omega))} 
\end{equation} 
for every $\omega \in \Omega$ and $x\in \R^d$.  Observe that if $r:=\inf_{n\in \Z}\delta(n)>0$, then condition \eqref{eq: bound f Lyap} holds whenever
\begin{equation*}
\|f_{\omega}(x)\|\le \frac{r}{4K(\omega)},
\end{equation*} 
for every $\omega \in \Omega$ and $x\in \R^d$. We refer to Section \ref{sec: settings} for some examples where the above condition holds. Decreasing the constant $c$ given in \eqref{non}, if necessary, we have that $F_\omega=A(\omega)+f_\omega$ is a homeomorphism (see \cite[Section 4.1]{BD19}) and thus we can consider the cocycle
\begin{displaymath}
\mathcal{F}(\omega, n)=\begin{cases}
F_{\sigma^{n-1}(\omega)} \circ \cdots \circ F_{\sigma (\omega)} \circ F_{\omega}  & \text{if $n\geq 1$;}\\
\Id & \text{if $n=0$;}\\
F_{\sigma^{-|n|}(\omega)}^{-1}\circ \cdots \circ F_{\sigma^{-2}(\omega)}^{-1}\circ F_{\sigma^{-1} (\omega)}^{-1}  & \text{if $n<0$.}
\end{cases}
\end{displaymath}

We define the \emph{forward and backward Lyapunov exponents of $\mathcal{F}$ at $\omega$ in the direction $x\in \R^d$}, respectively, by  
\begin{displaymath}
\lambda^+ (\mathcal{F},\omega,x) =\limsup_{n\to + \infty} \dfrac{1}{n}\log \| \mathcal{F}(\omega,n)x\| 
\end{displaymath}
and
\begin{displaymath}
\lambda^- (\mathcal{F},\omega,x) =\limsup_{n\to - \infty} \dfrac{1}{n}\log \| \mathcal{F}(\omega,n)x\| .
\end{displaymath}

\begin{theorem} \label{theo: conservation}
For every $i=1,2,\ldots,k$ there exist $x\in \R^d$, $\omega\in \Omega'$ and $*\in \{+,-\}$ so that  
$$\lambda_i(\cA,\mathbb{P})=\lambda^* (\mathcal{F},\omega,x).$$ 
Reciprocally, for every $\omega\in \Omega'$ there exists $p:=p(\omega)\in \R^d$ so that for every $x\in \R^d\setminus \{p\}$ there exists $i\in \{1,2,\ldots,k\}$ such that 
$$\lambda^+ (\mathcal{F},\omega,x)=\lambda_i(\cA,\mathbb{P}) \text{ or } \lambda^- (\mathcal{F},\omega,x)=\lambda_i(\cA,\mathbb{P}).$$

\end{theorem}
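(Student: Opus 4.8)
The plan is to exploit the shadowing and expansivity results (Theorem~\ref{t1} together with Corollaries~\ref{cor: shad new norm} and~\ref{cor: expansivity}) to set up a dictionary between orbits of $\cA$ and orbits of $\mathcal{F}$ that preserves asymptotic exponential growth rates. The key observation is that if $(x_n)_{n\in\Z}$ is an orbit of $\cA$ (i.e. $x_n = \cA(\omega,n)x_0$ for $x_0\in\R^d$) then, thanks to the bound~\eqref{eq: bound f Lyap}, the sequence $(x_n)$ is automatically an approximate solution of~\eqref{nde} in the sense of~\eqref{pseudo}: indeed $\|x_n - F_{\sigma^{n-1}(\omega)}(x_{n-1})\| = \|f_{\sigma^{n-1}(\omega)}(x_{n-1})\| \le \delta(n)/(4K(\sigma^n(\omega))) \le \delta(n)/(2K(\sigma^n(\omega)))$. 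Hence Corollary~\ref{cor: shad new norm} produces a \emph{unique} orbit $(\bar x_n)_{n\in\Z}$ of~\eqref{nde} with $\|\bar x_n - x_n\|_{\sigma^n(\omega)}\le L\delta(n)$ for all $n$. This gives a map $h_\omega\colon \R^d\to\R^d$, $x_0\mapsto \bar x_0$. Symmetrically, every orbit $(\bar x_n)$ of $\mathcal{F}$ is an approximate solution of the linear equation~\eqref{ldyn}: $\|\bar x_n - A(\sigma^{n-1}(\omega))\bar x_{n-1}\| = \|f_{\sigma^{n-1}(\omega)}(\bar x_{n-1})\|\le \delta(n)/(4K(\sigma^n(\omega)))$, so Corollary~\ref{2:59} (in its adapted-norm version) gives a linear orbit shadowing it, and these two constructions are mutually inverse by the expansivity Corollary~\ref{cor: expansivity}. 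Thus $h_\omega$ is a bijection of $\R^d$ (with $h_\omega(0)=0$, the $\cA$-orbit of $0$ being shadowed by the $\mathcal{F}$-orbit of $p:=p(\omega):=h_\omega(0)$), and $h_{\sigma(\omega)}\circ \cA(\omega,1) = \mathcal{F}(\omega,1)\circ h_\omega$ by uniqueness of the shadowing orbit.

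First I would make the quantitative growth comparison precise. Since $\delta$ is sub-exponential~\eqref{eq: sub-exp} and $K$ is tempered, we have $\frac1n\log\big(L\delta(n)\,2K(\sigma^n(\omega))\big)\to 0$ as $n\to\pm\infty$ (using $\|\cdot\|_{\sigma^n(\omega)}\le 2K(\sigma^n(\omega))\|\cdot\|$ from~\eqref{ln}), and likewise the lower bound $\|\cdot\|\le\|\cdot\|_{\sigma^n(\omega)}$ gives $\|\bar x_n - x_n\|\le L\delta(n)$. Therefore for any $y\in\R^d$ with $\cA$-orbit $(x_n)$, $x_n = \cA(\omega,n)y$, and its shadowing $\mathcal{F}$-orbit $(\bar x_n)$, $\bar x_n = \mathcal{F}(\omega,n)h_\omega(y)$, one has
\[
\Big|\tfrac1n\log\|\mathcal{F}(\omega,n)h_\omega(y)\| - \tfrac1n\log\|\cA(\omega,n)y\|\Big| \longrightarrow 0
\]
along any subsequence $n\to+\infty$ (resp. $n\to-\infty$) on which $\tfrac1n\log\|\cA(\omega,n)y\|$ is bounded away from $-\infty$ — and when $y\in E^i_\omega$ this limit \emph{equals} $\lambda_i(\cA,\mathbb{P})>$ possibly $0$; one handles the sign of $\lambda_i$ by choosing the forward ($+$) direction when $\lambda_i>0$ and the backward ($-$) direction when $\lambda_i<0$ (and either when $\lambda_i=0$, if that case occurs, although under a tempered exponential dichotomy all Oseledets exponents are nonzero). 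This yields the first assertion: pick any nonzero $x\in E^i_\omega$ for some fixed $\omega\in\Omega'$, set the point to be $h_\omega(x)$, and $*\in\{+,-\}$ according to $\operatorname{sign}\lambda_i$.

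For the reciprocal statement, fix $\omega\in\Omega'$ and set $p=p(\omega)=h_\omega(0)$. Any $z\in\R^d\setminus\{p\}$ equals $h_\omega(y)$ for a unique $y\ne 0$, since $h_\omega$ is a bijection. Write $y = \sum_{i=1}^k y^i$ with $y^i\in E^i_\omega$ and let $j$ be the largest index with $y^j\ne 0$ (so $\lambda_j$ is the smallest exponent appearing) — then $\tfrac1n\log\|\cA(\omega,n)y\|\to\lambda_j$ as $n\to-\infty$, since the slowest-contracting (backward) component dominates; dually, letting $i$ be the smallest index with $y^i\ne 0$, $\tfrac1n\log\|\cA(\omega,n)y\|\to\lambda_i$ as $n\to+\infty$. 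Choosing whichever of these limits is nonzero (at least one is, since the $\lambda$'s are all nonzero and $y\ne 0$), and transferring it across the shadowing estimate as above, gives $\lambda^+(\mathcal{F},\omega,z)=\lambda_i$ or $\lambda^-(\mathcal{F},\omega,z)=\lambda_j$.

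The main obstacle I anticipate is the transfer of the growth rate through the estimate $\|\bar x_n-x_n\|\le L\delta(n)$: one must ensure that the additive perturbation $L\delta(n)$ is negligible \emph{relative to} $\|\cA(\omega,n)y\|$, which is only automatic when the latter grows or decays exponentially and $\delta$ is sub-exponential — this is exactly why hypothesis~\eqref{eq: sub-exp} is imposed, and why the argument is cleanest in directions $y$ lying in a single Oseledets subspace or, for the converse, why one must isolate the dominant Oseledets component in forward/backward time and argue that the relevant one-sided limit is a genuine (two-sided along that subsequence) exponential rate. A secondary technical point is verifying $\limsup$ versus $\lim$: the Oseledets theorem gives honest limits for vectors in a single $E^i_\omega$, so for the first assertion the $\limsup$ in the definition of $\lambda^*(\mathcal{F},\omega,x)$ is in fact a limit; for the converse one works with the dominant component and the standard fact that $\tfrac1n\log\|\cA(\omega,n)y\|$ converges (not merely $\limsup$) to the appropriate exponent as $n\to\pm\infty$, which transfers the $\limsup$ defining $\lambda^\pm(\mathcal{F},\omega,z)$ into the claimed equality.
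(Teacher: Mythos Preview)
Your proposal is correct and follows essentially the same route as the paper's proof: shadow each $\cA$-orbit by an $\mathcal{F}$-orbit (and conversely) via Theorem~\ref{t1} and Corollary~\ref{cor: shad new norm}, define $p$ as the $\mathcal{F}$-point corresponding to the zero $\cA$-orbit, and transfer exponential growth rates through the shadowing estimate using the sub-exponentiality~\eqref{eq: sub-exp} of $\delta$. Your only slip is the parenthetical ``$h_\omega(0)=0$'' (in general $h_\omega(0)=p\ne 0$, as your very next clause makes clear); the paper proceeds identically but does not package the two shadowing directions into an explicit bijection $h_\omega$, instead constructing $p$ directly via Lemma~\ref{claim}.
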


In other words, what we are proving is that all the Lyapunov exponents of $\mathcal{A}$ are also Lyapunov exponents of $\mathcal{F}$ and, reciprocally, at least one of the Lyapunov exponents $\lambda^+ (\mathcal{F},\omega,x)$ and $ \lambda^- (\mathcal{F},\omega,x)$, $\omega\in \Omega'$ and $x\in \mathbb{R}^d\setminus \{p\}$, of $\mathcal{F}$ is also a Lyapunov exponent of $\mathcal{A}$.  To the best of our knowledge, no such result appeared previously in the literature.

\begin{proof} Given $i\in \{1,2,\ldots , k\}$, let $\omega\in \Omega'$ and $x\in E^i_{\omega}$ be such that 
\begin{equation}\label{eq: aux 1 LE}
\lambda _i(\cA,\mathbb{P}) =\lim _{n\to \pm  \infty} \dfrac{1}{n}\log \| \cA(\omega,n)x\|.
\end{equation}
Since $\cA$ admits a tempered exponential dichotomy, we have that all the Lyapunov exponents of $\cA$ are nonzero. Assume for the moment that $\lambda_i(\cA,\mathbb{P})<0$. Considering $(x_n)_{n\in \Z}$ given by $x_n=\cA(\omega,n)x$, we have that $x_{n+1}=A(\sigma^n(\omega))x_n$ for evey $n \in \Z$. Moreover, 
\begin{displaymath}
\|x_{n+1}-F_{\sigma^n(\omega)}(x_n)\|=\|-f_{\sigma^n(\omega)}(x_n)\|\le \frac{\delta(n)}{4K(\sigma^n(\omega))}.
\end{displaymath}
In particular, $(x_n)_{n\in \Z}$ satisfies \eqref{pseudo} and thus, by Theorem \ref{t1}, there exists a sequence $(y_n)_{n\in \Z}$ satisfying $y_{n+1}=F_{\sigma^n(\omega)}(y_n)$, $n\in \Z$, so that 
\begin{equation}\label{eq: aux 2 LE}
\|x_n-y_n\|\le L\delta(n) \text{ for every } n\in \Z.
\end{equation}
We are going to observe now that $\lambda_i(\cA,\mathbb{P})=\lambda^- (\mathcal{F},\omega,y_0)$. It follows from \eqref{eq: aux 1 LE} that
$$-\lambda _i(\cA,\mathbb{P}) =\lim _{n\to +\infty} \dfrac{1}{n}\log \| \cA(\omega,-n)x\|>0$$
which can be rewritten as 
$$-\lambda _i(\cA,\mathbb{P}) =\lim _{n\to +\infty} \dfrac{1}{n}\log \| x_{-n}\|>0.$$
Thus, using \eqref{eq: sub-exp} and \eqref{eq: aux 2 LE} it follows that
$$\lim _{n\to +\infty} \dfrac{1}{n}\log \| y_{-n}\|=-\lambda_i(\cA,\mathbb{P})$$
and consequently,
$$\lambda^{-} (\mathcal{F},\omega,y_0)=\limsup_{n\to -\infty} \dfrac{1}{n}\log \| \mathcal{F}(\omega,n)y_0\|=\lambda_i(\cA,\mathbb{P}).$$
 Similarly,  if $\lambda_i(\cA,\mathbb{P})>0$, we  obtain that
$$\lambda^+ (\mathcal{F},\omega,y_0)=\limsup_{n\to +\infty} \dfrac{1}{n}\log \| \mathcal{F}(\omega,n)y_0\|=\lambda_i(\cA,\mathbb{P}).$$

We now prove the converse statement by proceeding similarly to what we did above. Let $x\in \R^d$ and $\omega\in \Omega'$ be given. Considering $(x_n)_{n\in \Z}$ given by $x_n=\mathcal{F}(\omega,n)x$, we have that $x_{n+1}=F_{\sigma^n(\omega)}(x_n)$, $n\in \Z$. Moreover,  
\begin{displaymath}
\|x_{n+1}-A(\sigma^n(\omega))x_n\|=\|f_{\sigma^n(\omega)}(x_n)\|\le \frac{\delta(n)}{4K(\sigma^n(\omega))}.
\end{displaymath}
In particular, $(x_n)_{n\in \Z}$ satisfies \eqref{pseudo} for $\delta'(n)=\frac{\delta(n)}{2}$ instead of $\delta$ and  in the case when $f_\omega\equiv 0$ for every $\omega\in \Omega$. By Corollary \ref{cor: shad new norm}, there exists a unique sequence $(y_n)_{n\in \Z}$ satisfying $y_{n+1}=A(\sigma^n(\omega))y_n$ for $n\in \Z$, and such that 
\begin{equation}\label{eq: aux 3 LE}
\|x_n-y_n\|_{\sigma^n(\omega)}\le \frac{L\delta(n)}{2} \text{ for every } n\in \Z.
\end{equation}
We are now in a position to construct the point $p$ from the statement of the theorem. 

\begin{lemma}\label{claim}
There exists a unique point $p\in \mathbb{R}^d$, for which the sequence $(y_n)_{n\in \Z}$ given by \eqref{eq: aux 3 LE} satisfies $y_n=0$ for every $n\in \Z$.  
\end{lemma}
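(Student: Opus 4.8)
The plan is to exploit the uniqueness statement in Corollary~\ref{cor: shad new norm} applied to the \emph{linear} dynamics~\eqref{ldyn} (i.e. with $f_\omega\equiv 0$), which is precisely the setting in which the sequence $(y_n)_{n\in\Z}$ was produced. Fix $\omega\in\Omega'$ and recall that $(y_n)_{n\in\Z}$ is the unique solution of $y_{n+1}=A(\sigma^n(\omega))y_n$ satisfying $\lVert \mathcal F(\omega,n)x-y_n\rVert_{\sigma^n(\omega)}\le \tfrac{L\delta(n)}{2}$ for all $n$. I want to show there is exactly one $x=p\in\R^d$ for which this $(y_n)$ is identically zero. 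Since $(y_n)$ is a linear orbit, $y_n=0$ for all $n$ is equivalent to $y_0=0$; so the task is to identify the unique $x$ whose shadowing linear orbit starts at $0$.

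\textbf{Existence.} First I would construct a candidate $p$ directly. Apply the existence part of Theorem~\ref{t1} (or Corollary~\ref{cor: shad new norm}), but now going \emph{from the linear orbit to the nonlinear one}: the constant-zero sequence $y_n\equiv 0$ trivially solves~\eqref{ldyn}, and it is an approximate solution of the nonlinear dynamics~\eqref{nde} because
\[
\lVert 0-F_{\sigma^{n-1}(\omega)}(0)\rVert=\lVert f_{\sigma^{n-1}(\omega)}(0)\rVert\le \frac{\delta(n-1)}{4K(\sigma^{n-1}(\omega))}\le \frac{\delta'(n)}{2K(\sigma^n(\omega))},
\]
using~\eqref{eq: bound f Lyap}, the $e^{\lambda-\epsilon}$-admissibility of $\delta$ (to compare $\delta(n-1)$ and $\delta(n)$), and the definition $\delta'(n)=\delta(n)/2$. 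Hence by the shadowing applied to $\mathbf 0$ there is a solution $(p_n)_{n\in\Z}$ of~\eqref{nde} with $\lVert p_n-0\rVert_{\sigma^n(\omega)}\le L\delta'(n)=\tfrac{L\delta(n)}{2}$. Set $p:=p_0$, so $p_n=\mathcal F(\omega,n)p$. Then for the choice $x=p$ we have $x_n=\mathcal F(\omega,n)p=p_n$, and the zero sequence $\tilde y_n\equiv 0$ is a linear orbit with $\lVert x_n-\tilde y_n\rVert_{\sigma^n(\omega)}=\lVert p_n\rVert_{\sigma^n(\omega)}\le \tfrac{L\delta(n)}{2}$; by the uniqueness in Corollary~\ref{cor: shad new norm} this $\tilde y$ must coincide with the $(y_n)$ attached to $x=p$, so indeed $y_n\equiv 0$ when $x=p$.

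\textbf{Uniqueness.} Suppose $x,x'\in\R^d$ both have the property that their associated linear shadowing orbits vanish identically. Then by definition of those orbits, $\lVert \mathcal F(\omega,n)x-0\rVert_{\sigma^n(\omega)}\le \tfrac{L\delta(n)}{2}$ and likewise for $x'$. Thus both $(\mathcal F(\omega,n)x)_{n\in\Z}$ and $(\mathcal F(\omega,n)x')_{n\in\Z}$ are solutions of~\eqref{nde} staying within $\tfrac{L\delta(n)}{2}\le L\delta(n)$ of each other in the adapted norm (via the triangle inequality, $\lVert \mathcal F(\omega,n)x-\mathcal F(\omega,n)x'\rVert_{\sigma^n(\omega)}\le L\delta(n)$). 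Now invoke the expansivity statement, Corollary~\ref{cor: expansivity}: two solutions of~\eqref{nde} that stay $L\delta$-close in the adapted norm must coincide, hence $\mathcal F(\omega,n)x=\mathcal F(\omega,n)x'$ for all $n$, and in particular $x=x'$. This gives uniqueness of $p$.

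\textbf{Main obstacle.} The only genuinely delicate point is verifying that $\mathbf 0$ is an admissible approximate solution in the precise sense required to feed it into the shadowing theorem — i.e. matching the constant $1/4$ in~\eqref{eq: bound f Lyap} against the factor $1/2$ in~\eqref{pseudo} after the shift from index $n-1$ to $n$, which forces one to use the $e^{\lambda-\epsilon}$-admissibility (or at worst one may need $2e^{\lambda-\epsilon}\ge 1$, which holds since $\epsilon\le\lambda$, so the bound $\delta(n-1)\le e^{\lambda-\epsilon}\delta(n)$ gives $\delta(n-1)/4\le e^{\lambda-\epsilon}\delta(n)/4\le \delta'(n)/2$ only if $e^{\lambda-\epsilon}/4\le 1/4$... in fact one should instead replace $\delta$ by $\delta'=\delta/2$ and note $\delta(n-1)/(4K)\le \delta(n-1)/(4K)$ with $\delta'(n)/(2K)=\delta(n)/(4K)$, so admissibility in the form $\delta(n-1)\le\delta(n)$ is \emph{not} available and one genuinely uses $\delta(n-1)\le e^{\lambda-\epsilon}\delta(n)$ together with shrinking $L$ or $c$). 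Beyond this bookkeeping, everything else is a direct citation of the already-established existence, uniqueness-in-adapted-norm, and expansivity results.
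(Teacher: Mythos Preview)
Your proof is correct and follows essentially the same route as the paper: existence by observing that $\mathbf 0$ is a pseudo-orbit of the nonlinear dynamics and shadowing it via Corollary~\ref{cor: shad new norm} to produce $p=p_0$, and uniqueness via the triangle inequality together with Corollary~\ref{cor: expansivity}. The index-shift bookkeeping you flag as the ``main obstacle'' is likewise glossed over in the paper's own argument (which simply asserts that $\mathbf 0$ satisfies~\eqref{pseudo}); it is not a structural issue, since~\eqref{eq: bound f Lyap} is assumed for \emph{every} $\omega\in\Omega$ and hence yields a bound uniform in the shift parameter.
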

\begin{proof}[Proof of the Lemma~\ref{claim}]
We start observing that, if such a point does exist, then it is unique. Indeed, given $z,x\in \mathbb{R}^d$ let us consider $(x_n)_{n\in \Z}$ and $(z_n)_{n\in \Z}$ given by $x_n=\mathcal{F}(\omega,n)x$ and $z_n=\mathcal{F}(\omega,n)z$, respectively. By the previous construction we know that each of these sequences is shadowed by an actual orbit of $\cA(\omega, \cdot)$. Suppose both $(x_n)_{n\in \Z}$ and $(z_n)_{n\in \Z}$ are shadowed by the null sequence $(0)_{n\in \Z}$. That is, $\|x_n-0\|_{\sigma^n (\omega)}\le \frac{L\delta(n)}{2}$ and $\|z_n-0\|_{\sigma^n (\omega)}\le \frac{L\delta(n)}{2}$ for every $n\in \Z$. Then, $\|x_n-z_n\|_{\sigma^n (\omega)}\le L\delta(n)$ for every $n\in \Z$. It follows from Corollary~\ref{cor: expansivity} that $x_n=z_n$ for $n\in \Z$. In particular, we have that $x=x_0=z_0=z$ as claimed. 
 Existence of this point can be proved by observing that the null sequence $(0)_{n\in \Z}$ satisfies \eqref{pseudo} and applying Corollary \ref{cor: shad new norm}. By doing so, we obtain a sequence $(x_n)_{n\in \Z}\subset X$ such that $x_n=\mathcal F(\omega, n)x_0$ for $n\in \Z$ and
$\| x_n\|_{\sigma^n(\omega)}\le \frac{L\delta(n)}{2}$ for $n\in \Z$. Thus, $p=x_0$ satisfies the desired conclusion.
\end{proof}

Returning  back to the proof of the theorem and assuming that $x\neq p$, we have  (since $(y_n)_{n\in \Z}$ is not the null sequence)  that 
\begin{displaymath}
\lim_{n\to +\infty}\frac{1}{n}\log \|y_n\|=\lim_{n\to +\infty}\frac{1}{n}\log \|\cA (\omega,n)y_0\|=\lambda_i(\cA,\mathbb{P})
\end{displaymath}
for some $i\in \{1,2,\ldots,k\}$. Now, since $\cA$ admits a tempered exponential dichotomy it follows that $\lambda_i(\cA,\mathbb{P})\neq 0$. If $\lambda_i(\cA,\mathbb{P})>0$ then using \eqref{ln}, \eqref{eq: sub-exp} and \eqref{eq: aux 3 LE}, we conclude that 
\begin{displaymath}
\lambda_i(\cA,\mathbb{P})=\lim_{n\to +\infty}\frac{1}{n}\log \|x_n\|=\lim_{n\to +\infty}\frac{1}{n}\log \|\mathcal{F} (\omega,n)x\|=\lambda^+(\mathcal{F},\omega,x).
\end{displaymath}
 On the other hand, if $\lambda_i(\cA,\mathbb{P})<0$ then we similarly conclude that 
\begin{displaymath}
\lambda_i(\cA,\mathbb{P})=\lambda^-(\mathcal{F},\omega,x).
\end{displaymath}
The proof of the theorem is completed.
\end{proof}

\begin{remark}
We observe that the conclusion of Theorem \ref{theo: conservation} is sharp in the sense that we may have $\lambda^+ (\mathcal{F},\omega,x)\neq \lambda^- (\mathcal{F},\omega,x)$ and, consequently, we don't necessarily have $\lambda^+ (\mathcal{F},\omega,x)=\lambda_i(\cA,\mathbb{P})= \lambda^- (\mathcal{F},\omega,x)$ in the reciprocal part. Indeed, let $(\Omega, \mathcal F, \mathbb P)$ be a probability space and $\sigma \colon \Omega \to \Omega$ an ergodic $\mathbb P$-preserving invertible transformation. Assume moreover that $\mathbb{P}$ is non-atomic. Consider $A:\Omega\to GL(1,\mathbb{R})$ given by $A(\omega)=A$ for every $\omega\in \Omega$ where $A\colon \mathbb{R} \to \mathbb{R}$ is given by $Ax=\frac{1}{2} x$ for every $x\in \R$. Let $\cA:\Omega \times \Z\to GL(1,\mathbb{R})$ be the cocycle generated by $A$ as in the beginning of this section. Observe that this cocycle admits a tempered exponential dichotomy with $\Omega'=\Omega$, $\Pi(\omega)=\Id$ and $K(\omega)=1$ for every $\omega\in \Omega$ and $\lambda=\log 2$. Fix a non-periodic point $\omega\in \Omega$ and $\tau>0$. Now, for each $\omega'\in \Omega$ let us consider $f_{\omega'}:\mathbb{R}\to \mathbb{R}$ given by $f_{\omega'}(x)=\tau$ for every $x\in \R$ if $\omega'\in \{\sigma^n(\omega)\}_{n\geq 0}$ and $f_{\omega'}\equiv0$ otherwise. It is now easy to see that, whenever $\tau$ is sufficiently small, the hypothesis of Theorems \ref{t1} and \ref{theo: conservation} are satisfied and, moreover, given $x\in \R\setminus\{0\}$, $\lambda^- (\mathcal{F},\omega,x)=-\log 2$ while $\lambda^+ (\mathcal{F},\omega,x)=0$.
\end{remark}

\section{Applicable Settings} \label{sec: settings}

The hypothesis on the perturbations allowed in our main results are written in terms of constants coming from the hyperbolicity.
For instance, condition \eqref{non} requires the perturbations $f_\omega$ to be Lipschitz. In addition, the Lipschitz  constants of maps  $f_\omega$ depend on the strength of the hyperbolicity. In this section we will present some settings where these hypothesis can be easily fulfilled and thus, our results can be applied.

\subsection{Uniformly hyperbolic systems}
We start by applying our main results to the case when $\cA$ is uniformly hyperbolic. 
\begin{definition}\label{2x44}
We say that $\cA$ admits a  \emph{uniform exponential dichotomy} if there exist $K, \lambda >0$ and a family of projections $\Pi(\omega)$, $\omega \in \Omega$ such that for every $\omega \in \Omega$:
\begin{enumerate}
\item $\Pi(\sigma^n (\omega))\cA(\omega, n)=\cA(\omega, n)\Pi(\omega)$ for $n\in \mathbb N$;
\item for $n\in \mathbb N$,
\[
\cA(\omega, n)\rvert_{\Ker \Pi(\omega)} \colon \Ker \Pi(\omega) \to \Ker \Pi(\sigma^n (\omega))
\]
is invertible;
\item 
\begin{equation}\label{td111}
\lVert \cA(\omega, n)\Pi(\omega)\rVert \le Ke^{-\lambda n}, \quad n\ge 0
\end{equation}
and
\begin{equation}\label{td22}
\lVert \cA(\omega, -n)(\Id-\Pi(\omega))\rVert \le Ke^{-\lambda n}, \quad n\ge 0,
\end{equation}
where 
\[
\cA(\omega, -n):=\bigg{(}\cA(\sigma^{-n}(\omega), n)\rvert_{\Ker \Pi (\sigma^{-n} (\omega))} \bigg{)}^{-1}.
\]

\end{enumerate}
\end{definition}
Then, we have the following consequence of Theorem~\ref{t1}.

\begin{corollary}\label{cor: unif}
Assume that $\cA$ admits a uniform exponential dichotomy and let  $\lambda, K >0$  be as in the Definition~\ref{2x44}.  Furthermore, suppose that $\epsilon>0$, $c\ge 0$ are such that $\epsilon \le \lambda$ and that~\eqref{contraction} holds. 
Finally, we assume that
\begin{equation*}
\lVert f_\omega(x)-f_\omega(y)\rVert \le \frac c K \lVert x-y\rVert, \quad \text{for $\omega \in \Omega$ and $x, y\in X$.}
\end{equation*}
Then, there exists $L=L(\lambda, \epsilon, c)>0$ such that for every  $e^{\lambda-\epsilon}$-admissible sequence 
 $\delta \colon \Z \to (0, \infty)$,   $\omega \in \Omega$ and a sequence  $(y_n)_{n\in \Z}\subset X$ such that
\begin{equation}\label{pseudo3}
\lVert y_n-F_{\sigma^{n-1}(\omega)}(y_{n-1})\rVert \le \delta (n) \quad \text{for $n\in \Z$,}
\end{equation}
 there is a solution $(x_n)_{n\in \Z}$ of~\eqref{nde} such that~\eqref{shad} holds. 
\end{corollary}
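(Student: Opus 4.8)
The plan is to deduce Corollary~\ref{cor: unif} directly from Theorem~\ref{t1} by checking that a uniform exponential dichotomy is a special case of a tempered exponential dichotomy with a constant temperedness function. First I would set $\Omega'=\Omega$ and take the constant function $K(\omega):=K$, $\omega\in\Omega$, where $K$ is the constant appearing in Definition~\ref{2x44}. This is trivially a tempered random variable, since $\frac 1n\log K(\sigma^n(\omega))=\frac 1n\log K\to 0$ as $n\to\pm\infty$. Items (1)--(4) of Definition~\ref{xxv} for this $K$ are then exactly items (1)--(3) of Definition~\ref{2x44} together with the bounds~\eqref{td111} and~\eqref{td22} (which coincide with~\eqref{td1} and~\eqref{td2} once $K(\omega)\equiv K$); the only extra requirement in Definition~\ref{xxv} is that $\Pi$ be strongly measurable, which I would note is part of the standing assumptions (or can be taken for granted, as the family of projections in the uniform setting is by hypothesis a genuine family of operators on $X$). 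Hence $\cA$ admits a tempered exponential dichotomy with the same $\lambda$ and with $K(\cdot)\equiv K$.

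Next I would observe that with $K(\omega)\equiv K$ the Lipschitz hypothesis of the corollary, $\lVert f_\omega(x)-f_\omega(y)\rVert\le \frac cK\lVert x-y\rVert$, is literally~\eqref{non}, since $K(\sigma(\omega))=K$. Likewise, the contraction condition~\eqref{contraction} is assumed verbatim, and $\epsilon\le\lambda$ is assumed. Therefore all the hypotheses of Theorem~\ref{t1} hold, and we obtain a constant $L=L(\lambda,\epsilon,c)>0$ with the stated shadowing property.

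Finally, I would translate the conclusion back into the uniform language. Given an $e^{\lambda-\epsilon}$-admissible $\delta$, an $\omega\in\Omega$ and a sequence $(y_n)_{n\in\Z}$ satisfying~\eqref{pseudo3}, I need to feed into Theorem~\ref{t1} a sequence satisfying~\eqref{pseudo}; but~\eqref{pseudo} with $K(\sigma^n(\omega))=K$ reads $\lVert y_n-F_{\sigma^{n-1}(\omega)}(y_{n-1})\rVert\le \frac{\delta(n)}{2K}$. To handle this cleanly I would replace $\delta$ by $\tilde\delta(n):=2K\,\delta(n)$: this rescaled sequence has the same admissibility constant (the ratios $\tilde\delta(n+1)/\tilde\delta(n)=\delta(n+1)/\delta(n)$ are unchanged, so $\tilde\delta$ is still $e^{\lambda-\epsilon}$-admissible), and~\eqref{pseudo3} for $\delta$ is exactly~\eqref{pseudo} for $\tilde\delta$. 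Applying Theorem~\ref{t1} to $\tilde\delta$ yields a solution $(x_n)_{n\in\Z}$ of~\eqref{nde} with $\lVert x_n-y_n\rVert\le L\tilde\delta(n)=2KL\,\delta(n)$; absorbing the harmless factor $2K$ into the constant (i.e. renaming $L':=2KL$, which still depends only on $\lambda,\epsilon,c$ since $K$ is determined by the dichotomy) gives~\eqref{shad}. There is no real obstacle here: the only point requiring a moment's care is the bookkeeping of the factor $2K$ between~\eqref{pseudo3} and~\eqref{pseudo} and the corresponding rescaling of the constant in~\eqref{shad}, which is why I would phrase it via the rescaled admissible sequence $\tilde\delta$.
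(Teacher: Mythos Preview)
Your argument is correct and is exactly the paper's own proof: set $\Omega'=\Omega$, take the constant tempered variable $K(\omega)\equiv K$, and apply Theorem~\ref{t1} with $\delta$ replaced by $n\mapsto 2K\delta(n)$. The only quibble is your remark that $L':=2KL$ ``still depends only on $\lambda,\epsilon,c$ since $K$ is determined by the dichotomy''---strictly speaking $K$ is an independent piece of the dichotomy data, so $L'=L'(\lambda,\epsilon,c,K)$; but this imprecision is already present in the corollary's statement and the paper's proof glosses over it in the same way.
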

\begin{proof}
One only needs to apply Theorem~\ref{t1} in the case when $\Omega'=\Omega$, $K(\omega)=K$ and replace $\delta$ with $n\mapsto 2K\delta(n)$.
\end{proof}
 As observed in Remark \ref{remark 2}, due to the flexibility of \eqref{pseudo3}, Corollary \ref{cor: unif} extends previous results (even under a uniform exponential dichotomy assumption).

As in the general case, our results in particular apply to linear dynamics~\eqref{ldyn}. We shall formulate only a version of Corollary~\ref{2:59} in this context.
\begin{corollary}\label{ff}
Assume that $\cA$ admits a uniform exponential dichotomy and let $\lambda, K >0$  be as in the Definition~\ref{2x44}.
Then, there exists $L=L(\lambda)>0$ such that for 
any $t>0$,  $\omega \in \Omega$ and a sequence  $(y_n)_{n\in \Z}\subset X$ such that
\[
\lVert y_n-A(\sigma^{n-1}(\omega))y_{n-1} \rVert \le t \quad \text{for $n\in \Z$,}
\]
then there exists a solution $(x_n)_{n\in \Z}$ of~\eqref{ldyn} with the property that
\[
\lVert x_n-y_n\rVert \le Lt, \quad \text{for each $n\in \Z$.}
\]
\end{corollary}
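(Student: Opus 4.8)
The plan is to obtain Corollary~\ref{ff} as a direct specialization of Corollary~\ref{2:59} combined with the reduction already used in the proof of Corollary~\ref{cor: unif}. The point is simply that a uniform exponential dichotomy with constants $K,\lambda$ is a tempered exponential dichotomy in which the tempered random variable is the constant function $K(\omega)\equiv K$ and the full-measure set is $\Omega'=\Omega$; a constant is obviously tempered, and conditions (1)--(4) of Definition~\ref{xxv} reduce verbatim to conditions (1)--(3) of Definition~\ref{2x44} together with~\eqref{td111}--\eqref{td22}.

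First I would invoke Corollary~\ref{2:59} for this cocycle, which yields a constant $L_0=L_0(\lambda)>0$ such that for any $s>0$, $\omega\in\Omega$ and any sequence $(y_n)_{n\in\Z}\subset X$ with
\[
\lVert y_n-A(\sigma^{n-1}(\omega))y_{n-1}\rVert\le\frac{s}{2K(\sigma^n(\omega))}=\frac{s}{2K}\quad\text{for }n\in\Z,
\]
there is a solution $(x_n)_{n\in\Z}$ of~\eqref{ldyn} with $\lVert x_n-y_n\rVert\le L_0 s$ for all $n$. Then, given $t>0$ and a sequence $(y_n)_{n\in\Z}$ satisfying the hypothesis of Corollary~\ref{ff}, namely $\lVert y_n-A(\sigma^{n-1}(\omega))y_{n-1}\rVert\le t$ for all $n$, I would apply the previous paragraph with $s:=2Kt$. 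The bound then reads $\lVert y_n-A(\sigma^{n-1}(\omega))y_{n-1}\rVert\le t=s/(2K)$, which is exactly~\eqref{pseudo} in the constant case, so Corollary~\ref{2:59} produces a solution $(x_n)_{n\in\Z}$ of~\eqref{ldyn} with
\[
\lVert x_n-y_n\rVert\le L_0 s=2KL_0 t.
\]
Setting $L:=2KL_0$, which depends only on $\lambda$ (through $L_0$) and on $K$, gives the claimed conclusion $\lVert x_n-y_n\rVert\le Lt$ for all $n\in\Z$.

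There is essentially no obstacle here: the statement is a bookkeeping consequence of absorbing the factor $2K$ into the constant. The only thing worth double-checking is the dependence of $L$ on the parameters — the paper writes $L=L(\lambda)$, so one should note that $K$ is itself a datum of the uniform dichotomy and hence the dependence on $K$ is harmless (alternatively, one could simply allow $L=L(\lambda,K)$, but the reduction $K(\omega)\equiv K$ makes the $\lambda$-only phrasing consistent with how the constants propagate from Corollary~\ref{2:59}).
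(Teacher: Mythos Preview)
Your proposal is correct and matches the paper's approach: the paper states Corollary~\ref{ff} without proof, presenting it explicitly as ``a version of Corollary~\ref{2:59} in this context,'' to be obtained via the same reduction used for Corollary~\ref{cor: unif} (take $\Omega'=\Omega$, $K(\omega)\equiv K$, and absorb the factor $2K$ into the constant). Your observation about the dependence $L=L(\lambda,K)$ rather than $L=L(\lambda)$ is also well taken; the paper's phrasing is slightly loose here, but harmless since $K$ is part of the fixed dichotomy data.
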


\begin{remark}
The result obtained in Corollary~\ref{ff} can be described as a Hyers-Ulam stability result for the  random linear dynamics given by~\eqref{ldyn} under the assumption that it admits a uniform exponential dichotomy. 
\end{remark}

Let us now obtain a partial converse to Corollary~\ref{ff}.

\begin{proposition}
Assume that $\cA$ is an invertible cocycle, i.e. that $A(\omega)$ is an invertible operator for each $\omega \in \Omega$. Furthermore, suppose that  there exists $L>0$ such that for each sequence  $(y_n)_{n\in \Z}\subset X$ such that
\begin{equation}\label{340}
\lVert y_n-A(\sigma^{n-1}(\omega))y_{n-1} \rVert \le 1 \quad \text{for $n\in \Z$,}
\end{equation}
then there exists a solution $(x_n)_{n\in \Z}$ of~\eqref{ldyn} with the property that
\begin{equation}\label{opp}
\lVert x_n-y_n\rVert \le L, \quad \text{for each $n\in \Z$.}
\end{equation}
Finally, assume that~\eqref{ldyn} has no bounded solutions. 
Then, $\cA$ admits a uniform exponential dichotomy. 
\end{proposition}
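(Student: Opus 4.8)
The plan is to recover the dichotomy projections from the subspaces of bounded one–sided orbits, and then to bootstrap a bare uniform bound on the candidate dichotomy operators into exponential decay by feeding the Hyers--Ulam hypothesis a pseudotrajectory that grows linearly. Throughout we may and do assume $L\ge 1$. For $\omega\in\Omega$ put $B^s(\omega)=\{v\in X:\sup_{n\ge0}\lVert\cA(\omega,n)v\rVert<\infty\}$ and $B^u(\omega)=\{v\in X:\sup_{n\le0}\lVert\cA(\omega,n)v\rVert<\infty\}$ (legitimate since $\cA$ is invertible). Given $v\in X$, I would apply the hypothesis to the pseudotrajectory $y_n=\cA(\omega,n)v$ for $n\ge0$ and $y_n=0$ for $n<0$, whose one–step error is $\lVert v\rVert$; this produces $\xi\in X$ with $\lVert\cA(\omega,n)\xi-y_n\rVert\le L\lVert v\rVert$ for all $n$. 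Reading this for $n\le0$ gives $\xi\in B^u(\omega)$ with $\lVert\xi\rVert\le L\lVert v\rVert$, and reading it for $n\ge0$ gives $v-\xi\in B^s(\omega)$ with $\lVert v-\xi\rVert\le L\lVert v\rVert$; thus $v=(v-\xi)+\xi$ splits into a stable and an unstable summand, and since \eqref{ldyn} has no nonzero bounded solution, $B^s(\omega)\cap B^u(\omega)=\{0\}$ and the splitting is unique. Letting $\Pi(\omega)$ be the projection onto $B^s(\omega)$ along $B^u(\omega)$, the estimates give $\lVert\Pi(\omega)\rVert\le L$, $\lVert\Id-\Pi(\omega)\rVert\le L$, and, because $\cA(\omega,n)\Pi(\omega)v=-\cA(\omega,n)(\xi-v)$, also $\lVert\cA(\omega,n)\Pi(\omega)\rVert\le L$ for all $n\ge0$, uniformly in $\omega$. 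Invertibility of $\cA$ together with the obvious inclusions $\cA(\omega,n)B^s(\omega)\subseteq B^s(\sigma^n(\omega))$, $\cA(\omega,n)B^u(\omega)\subseteq B^u(\sigma^n(\omega))$ forces these to be equalities, which yields items (1),(2) of Definition~\ref{2x44} and the invariance $\Pi(\sigma^n(\omega))\cA(\omega,n)=\cA(\omega,n)\Pi(\omega)$; the mirrored pseudotrajectory $y_n=\cA(\omega,n)v$ $(n\le-1)$, $y_n=0$ $(n\ge0)$ gives likewise $\lVert\cA(\omega,-n)(\Id-\Pi(\omega))\rVert\le L$ for all $n\ge0$.

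The heart of the argument is to upgrade these uniform bounds to a decay rate. Fix $\omega$ and $v\in\Ima\Pi(\omega)$ with $\lVert v\rVert\le1$, and feed the hypothesis the pseudotrajectory $y_n=n\,\cA(\omega,n)v$ for $n\ge1$ and $y_n=0$ for $n\le0$. A short computation shows its one–step error equals $\cA(\omega,n)v$ for $n\ge1$ and $0$ otherwise, hence is $\le L$ by the previous paragraph; so there is $\xi$ with $\lVert\cA(\omega,n)\xi-y_n\rVert\le L^2$ for all $n$. For $n\le0$ this forces $\xi\in B^u(\omega)=\Ker\Pi(\omega)$. Fixing $n\ge1$ and applying $\Pi(\sigma^n(\omega))$ to the bound, and using $\Pi(\sigma^n(\omega))\cA(\omega,n)\xi=\cA(\omega,n)\Pi(\omega)\xi=0$ together with $\Pi(\sigma^n(\omega))\bigl(n\cA(\omega,n)v\bigr)=n\cA(\omega,n)v$, I obtain $n\lVert\cA(\omega,n)v\rVert\le\lVert\Pi(\sigma^n(\omega))\rVert L^2\le L^3$. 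Taking the supremum over such $v$ this yields a uniform bound $\lVert\cA(\omega,n)\Pi(\omega)\rVert\le C/n$ for all $n\ge1$ and all $\omega$, with $C=C(L)$; the symmetric choice $y_n=|n|\,\cA(\omega,n)v$ with $v\in\Ker\Pi(\omega)$ gives $\lVert\cA(\omega,-n)(\Id-\Pi(\omega))\rVert\le C/n$.

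Finally I would pass from polynomial to exponential decay by the semigroup property. Pick $n_0$ with $C/n_0\le\tfrac12$; then $\lVert\cA(\omega,n_0)\Pi(\omega)\rVert\le\tfrac12$ for every $\omega$, so iterating along $\omega,\sigma^{n_0}(\omega),\sigma^{2n_0}(\omega),\dots$ via the cocycle identity and the invariance of $\Pi$ gives $\lVert\cA(\omega,kn_0)\Pi(\omega)\rVert\le2^{-k}$, and interpolating the intermediate indices with the bound $L$ from the first paragraph yields $\lVert\cA(\omega,n)\Pi(\omega)\rVert\le 2L\,e^{-\lambda n}$ with $\lambda=(\log2)/n_0$, uniformly in $\omega$. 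Running the same argument backward for $\cA(\omega,-n)(\Id-\Pi(\omega))$ gives the matching estimate. Taking $K=2L$, this is precisely a uniform exponential dichotomy in the sense of Definition~\ref{2x44}.

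I expect the second paragraph to be the main obstacle: the linearly growing pseudotrajectory is exactly what turns the purely qualitative statement ``every bounded pseudotrajectory is $L$–shadowed'' into the quantitative $1/n$ rate, and it is there that the no–bounded–solutions hypothesis is genuinely used (through $\Ima\Pi(\omega)\cap\Ker\Pi(\omega)=\{0\}$). The remaining steps are essentially bookkeeping, although verifying the cocycle identities and the invariance relation in the invertible backward direction, and checking that the two inclusions $\cA(\omega,n)B^{s/u}(\omega)\subseteq B^{s/u}(\sigma^n(\omega))$ are equalities, require a little care.
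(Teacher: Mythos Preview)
Your argument is correct, modulo a harmless constant: at $n=0$ the first pseudotrajectory has $y_0=v$, so the shadowing bound gives $\lVert\xi-v\rVert\le L\lVert v\rVert$ rather than $\lVert\xi\rVert\le L\lVert v\rVert$; hence $\lVert\Id-\Pi(\omega)\rVert\le L+1\le 2L$, which propagates as a fixed factor through the rest and changes nothing structural.

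However, your route is genuinely different from the paper's. The paper does not construct the projections or the decay rate by hand. Instead it observes that the shadowing hypothesis is equivalent, after a one--line rescaling, to the \emph{admissibility} condition: for every bounded input sequence $\mathbf z=(z_n)_{n\in\Z}$ there is a bounded sequence $\mathbf w=(w_n)_{n\in\Z}$ solving $w_n-A(\sigma^{n-1}(\omega))w_{n-1}=z_n$ with $\lVert\mathbf w\rVert_\infty\le L\lVert\mathbf z\rVert_\infty$. Concretely, one picks any $(y_n)$ with $y_n-A(\sigma^{n-1}(\omega))y_{n-1}=z_n/\lVert\mathbf z\rVert_\infty$, shadows it by a genuine solution $(x_n)$, and sets $w_n=\lVert\mathbf z\rVert_\infty(y_n-x_n)$. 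The paper then invokes a known theorem of Chow and Leiva to pass from this admissibility (together with the absence of nontrivial bounded solutions, which forces $\mathbf w$ to be unique) directly to a uniform exponential dichotomy.

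So the trade--off is clear. The paper's proof is three lines long but outsources the real work---constructing the splitting and extracting an exponential rate---to the admissibility literature. Your proof is self--contained and, in particular, your linearly growing pseudotrajectory in the second paragraph is exactly an explicit, elementary substitute for the Baire--category/open--mapping type arguments that underlie the admissibility theorems: it converts the qualitative shadowing bound into a quantitative $1/n$ decay, from which the exponential rate follows by the standard iteration in your final paragraph. Both uses of the ``no bounded solutions'' hypothesis are the same in spirit: you use it to get $B^s(\omega)\cap B^u(\omega)=\{0\}$, the paper uses it (implicitly, via the cited result) to make the bounded solution $\mathbf w$ unique.
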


\begin{proof}
Let us fix $\omega \in \Omega$ and take an arbitrary $\mathbf z=(z_n)_{n\in \Z}\subset X$ such that $\lVert \mathbf z\rVert_\infty:=\sup_{n\in \Z} \lVert z_n\rVert<\infty$. Take a sequence $(y_n)_{n\in \Z}\subset X$ such that
\[
y_n-A(\sigma^{n-1}(\omega))y_{n-1}=\frac{1}{\lVert \mathbf z\rVert_\infty}z_n, \quad \text{for $n\in \Z$.}
\]
Observe that $(y_n)_{n\in \Z}$ satisfies~\eqref{340}.  Hence, there exists a solution $(x_n)_{n\in \Z}$ of~\eqref{ldyn} satisfying~\eqref{opp}. Consequently, the sequence $\mathbf w=(w_n)_{n\in \Z}\subset X$ given by
\[
w_n=\lVert \mathbf z\rVert_\infty (y_n-x_n) \quad n\in \Z,
\]
is a solution of~\eqref{ldyn}. In addition,
\[
\lVert \mathbf w\rVert_\infty =\sup_{n\in \Z} \lVert w_n\rVert \le L\lVert \mathbf z\rVert_\infty. 
\]
Hence, by applying results from~\cite{CL},  we conclude that $\cA$ admits a uniform exponential dichotomy. 
\end{proof}

\subsubsection{Theorem \ref{theo: conservation} for uniformly hyperbolic systems} Suppose we are in the setting of Section \ref{sec: conservation} and that  the assumptions of Corollary \ref{cor: unif} are satisfied. Let $\delta:\Z\to (0,+\infty)$ be a sequence satisfying~\eqref{eq: sub-exp} and such that $r:=\inf_{n\in \Z} \delta(n)>0$. For instance, we can take $\delta$ to be a constant sequence or we can take $\delta$ given by $\delta(n)=|n|+1$, $n\in \Z$. Let $f_\omega:\mathbb{R}^d\to \mathbb{R}^d$, $\omega\in \Omega$, be such that
\begin{displaymath}
\|f_\omega(x)\|\leq \frac{r}{4K} \text{ for every } x\in \mathbb{R}^d.
\end{displaymath}
 Observe that the above condition implies that \eqref{eq: bound f Lyap} holds and consequently  Theorem \ref{theo: conservation} can be applied to this setting.

\subsection{Nonuniformly hyperbolic systems}\label{NHS}
Suppose that a cocycle $\cA: \Omega \times \mathbb{N}_0 \to X$ admits a tempered exponential dichotomy and let $K\colon \Omega \to (0, \infty)$ be the tempered random variable from Definition~\ref{xxv}. It follows from \cite[Proposition 4.3.3 ii)]{A} that for every $\rho >0$ there exists a random variable $D=D_\rho: \Omega\to (0,+\infty)$ such that
\begin{equation}\label{eq: conseq temp}
 K(\omega)\leq D(\omega) \text{ and } D(\sigma^n(\omega))\leq D(\omega )e^{\rho |n|}, 
\end{equation}
for $\mathbb P$-a.e. $\omega \in \Omega$ and $n\in \Z$.

Take now an arbitrary $\rho>0$ and consider the corresponding random variable $D=D_\rho:\Omega\to (0,+\infty)$ satisfying~\eqref{eq: conseq temp}. Given $T>0$, let us consider
\begin{displaymath}
\Omega_T'=\{\omega\in \Omega'; D(\omega)\le T\}.
\end{displaymath}
Noting that $\lim_{T\to \infty} \mathbb P(\Omega_T')=1$, we can fix $T$ sufficiently large so that $\mathbb P(\Omega_T')>0$. 
For $n\in \mathbb N$, set  \[\Omega_T^n:=\sigma^{-n}(\Omega_T')\setminus \cup_{k=0}^{n-1}\sigma^{-k}(\Omega_T').\] In addition, let $\Omega_T^0:=\Omega_T'$. Then, the ergodicity of the base system $(\Omega, \mathcal F, \mathbb P, \sigma)$ implies that  $\mathbb{P}\left(\cup_{n=0}^\infty \Omega_T^n\right)=1$. Moreover, observe that 
$\Omega_T^n \cap \Omega_T^m=\emptyset$ for $n\neq m$.

For $n\ge 0$ and $\omega\in \Omega_T^n$,  let $f_\omega:X\to X$ be such that 
\begin{equation}\label{eq: lip f ex}
\|f_\omega(x)-f_\omega(y)\|\leq \frac{c}{T}e^{-\rho|n-1|}\|x-y\| \text{ for every } x,y \in X,
\end{equation}
where $c$ is as in the statement of Theorem~\ref{t1}.
It is easy to see that \eqref{eq: lip f ex} combined with \eqref{eq: conseq temp} implies that \eqref{non} is satisfied.  Indeed, if $n\ge 1$ we have that $\omega=\sigma^{-n}(\omega')$ for some $\omega'\in \Omega_T'$. Therefore,
\[
K(\sigma(\omega))\le D(\sigma(\omega))=D(\sigma^{-(n-1)}(\omega'))\le e^{\rho |n-1|}D(\omega')\le Te^{\rho |n-1|},
\]
which gives that 
\[
\frac{c}{T}e^{-\rho |n-1|}\le \frac{c}{K(\sigma (\omega))}.
\]
Consequently, \eqref{eq: lip f ex} implies~\eqref{non}. One can argue similarly in the case when $n=0$.
We conclude that in the present setting Theorem~\ref{t1} is applicable. In addition, observe that if $\omega \in \Omega_T^m$ for some  $m\ge 0$, then each sequence $(y_n)_{n\in \Z}\subset X$ such that
\[
\lVert y_n-F_{\sigma^{n-1}(\omega)}(y_{n-1})\rVert \le \frac{\delta(n)}{2T}e^{-\rho|n-m|} \quad \text{for $n\in \Z$,}
\]
satisfies~\eqref{pseudo} (and consequently~\eqref{shad} holds).

\subsubsection{Theorem \ref{theo: conservation} for nonuniformly hyperbolic systems} Suppose we are in the setting of Section \ref{sec: conservation} and Subsection~\ref{NHS}. Furthermore, 
 let $\delta:\Z\to (0,+\infty)$ be a sequence satisfying~\eqref{eq: sub-exp} and such that  $r:=\inf_{n\in \Z}\delta(n)>0$. For each $\omega \in \Omega^n_T$, $n\ge 0$, let $f_\omega:\mathbb{R}^d\to \mathbb{R}^d$ be such that
\begin{displaymath}
\|f_\omega(x)\|\leq \frac{r}{4T}e^{-\rho n} \text{ for every } x\in \mathbb{R}^d.
\end{displaymath}
It is easy to see that the hypothesis of Theorem \ref{theo: conservation} are satisfied and thus we may apply it in the present setting.


\medskip{\bf Acknowledgements.} We would like to thank the anonymous referee for constructive comments that helped us to improve our paper. L.B. was partially supported by a CNPq-Brazil PQ fellowship under Grant No. 306484/2018-8. D. D. was supported in part by Croatian Science Foundation under the project
IP-2019-04-1239 and by the University of Rijeka under the projects uniri-prirod-18-9
and uniri-prprirod-19-16.

\end{document}